\begin{document}

\newtheorem{theorem}{Theorem}
\newtheorem{conjecture}[theorem]{Conjecture}
\newtheorem{proposition}[theorem]{Proposition}
\newtheorem{question}[theorem]{Question}
\newtheorem{lemma}[theorem]{Lemma}
\newtheorem{cor}[theorem]{Corollary}
\newtheorem{obs}[theorem]{Observation}
\newtheorem{proc}[theorem]{Procedure}
\newcommand{\comments}[1]{} 
\def\Z{\mathbb Z}
\def\Za{\mathbb Z^\ast}
\def\Fq{{\mathbb F}_q}
\def\R{\mathbb R}
\def\N{\mathbb N}
\def\cH{\overline{\mathcal H}}
\def\cF{\mathcal F}
\def\ip{i_p}
\def\lp{l_p}
\def\dap{d_{a,p^2}}
\def\Im{{\rm Image}(d_{a,p^2})}

\title[Combinatorial Geometry and Modular Hyperbolas]{Two Combinatorial Geometric Problems involving Modular Hyperbolas}

\author[Khan]{Mizan R. Khan}
\address{MRK: Department of Mathematics and Computer Science, Eastern Connecticut State University, Willimantic, CT 06226, USA}
\email{khanm@easternct.edu}
\author[Magner]{Richard Magner}
\address{RM: Department of Mathematics and Computer Science, Eastern Connecticut State University, Willimantic, CT 06226}
\email{magnerri@my.easternct.edu}
\author[Senger]{Steven Senger}
\address{SS: Department of Mathematical Sciences, University of Delaware, Newark, DE 19716, USA}
\email{senger@math.udel.edu}
\author[Winterhof]{Arne Winterhof}
\address{AW: Johann Radon Institute for Computational and Applied Mathematics, Austrian Academy of Sciences, 
Altenberger Str.\ 69, A-4040 Linz,Austria}
\email{arne.winterhof@oeaw.ac.at}

\date{\today}
\subjclass{11A05, 11A07, 52C30}

\begin{abstract}
For integers $a$ and $n\ge 2$ with $\gcd(a,n)=1$ let $\cH_{a,n}$ be the set of least residues of a modular hyperbola
$$ \cH_{a,n} = \{ (x,y) \in \Z^2 \, : \, xy \equiv a \!\!\!\! \pmod{n}, 1 \le x,y \le n-1 \}.$$ 
In this paper we prove two combinatorial geometric results about $\cH_{a,p^m}$, where $p^m$ is a prime power. 
Our first result shows that the number of ordinary lines spanned by $\cH_{1,p^m}$ is at least
$$(p-1)p^{m-1}\left(\frac{p^{m-1}(p-2)}{2} +c(p^m)\right),$$ 
where  
\begin{enumerate}
\item[(i)] $c(p)=0$, $c(4)=1/2$, $c(8)=0$, $c(49)=6/7$;
\item[(ii)] $c(p^m)=6/13$ if $m\ge 2$, $p^m$ is small and $p^m\not= 4,8,49$;
\item[(iii)] $c(2^m)=1/2$ if $m$ is sufficiently large;
\item[(iv)] $c(p^m)=3/4+o(1)$ if $p>2$, $m\ge 2$ and $p^m$ is sufficiently large.
\end{enumerate}
In the special case of $m=1$ we have equality.  

The second result gives a partial answer 
to a question of Shparlinski~\cite{IS} on the cardinality of  
$$\cF_{a,n}= \{ \sqrt{x^2 + y^2} \, : \, (x,y) \in \cH_{a,n} \}.$$ 
\end{abstract}

\maketitle 

\section{Ordinary lines in $\cH_{1,p^m}$}

For $n \ge 2$ let $\Za_n$ be the group of invertible elements modulo $n$ and let ${\mathcal H}_{a,n}$ denote the {\em modular hyperbola} $xy \equiv a \pmod{n}$ where 
$x,y,a \in \Z$, with $\gcd(a,n)=1$. (We insert the condition that $a$ and $n$ are relatively prime to ensure that  ${\mathcal H}_{a,n} \subseteq \Za_n \times \Za_n$.) 
Following~\cite{IS} we define  
$\cH_{a,n} = {\mathcal H}_{a,n} \cap [1,n-1]^2$, that is,
$$ \cH_{a,n} = \{ (x,y) \in \Z^2 \, : \, xy \equiv a \!\!\!\! \pmod{n}, 1 \le x,y \le n-1 \}.$$ 
In the special case of $\cH_{1,n}$ we will simply drop the 1 and write $\cH_n$.

Let $S$ be a finite set of points in the Euclidean space. A line that passes through exactly two distinct points of $S$ is said to be an \emph{ordinary line} spanned by $S$. The notion of an ordinary line arose in the context of the famous \emph{Sylvester-Gallai} theorem in combinatorial geometry.

\begin{theorem}[Sylvester-Gallai] Let P be a finite set of points in the plane, not all on a line. Then there is an ordinary line spanned by P.
\end{theorem}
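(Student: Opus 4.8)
The plan is to give Kelly's well-known extremal-distance argument, which produces an ordinary line with no case analysis on the combinatorial structure of $P$. First I would consider the collection of all pairs $(\ell, p)$ in which $\ell$ is a line through at least two points of $P$ and $p \in P$ is a point \emph{not} lying on $\ell$. This collection is finite and, crucially, nonempty precisely because $P$ is not contained in a single line. For each such pair let $d(\ell,p)$ denote the perpendicular distance from $p$ to $\ell$, and choose a pair $(\ell_0, p_0)$ for which this distance is minimal. I claim that $\ell_0$ is an ordinary line, which immediately proves the theorem.

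To establish the claim I would argue by contradiction, supposing that $\ell_0$ contains at least three points of $P$. Let $q$ be the foot of the perpendicular from $p_0$ to $\ell_0$, so that $d(\ell_0,p_0) = |p_0 q|$. The point $q$ splits $\ell_0$ into two closed rays whose overlap is $\{q\}$, so by the pigeonhole principle at least two of the $\ge 3$ points of $P$ on $\ell_0$ lie on one of these closed rays; call them $a$ and $b$, labelled so that $a$ lies on the segment from $q$ to $b$ (allowing the degenerate possibility $a = q$). The key geometric estimate is then to compare $|p_0 q|$ with the distance from $a$ to the auxiliary line $\ell' = p_0 b$.

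The heart of the matter is the strict inequality $\operatorname{dist}(a, \ell') < |p_0 q|$. I would obtain it by an area computation: since the triangle $p_0 q b$ has a right angle at $q$, its area equals $\tfrac12 |p_0 q|\,|qb|$, and equating this with $\tfrac12 |p_0 b|\cdot\operatorname{dist}(q,\ell')$ yields $\operatorname{dist}(q,\ell') = |p_0 q|\,|qb|/|p_0 b|$. Because $a$ lies between $q$ and $b$ on $\ell_0$, and $\ell'$ meets $\ell_0$ only at $b$, distances to $\ell'$ scale linearly along $\ell_0$, giving $\operatorname{dist}(a,\ell') \le \operatorname{dist}(q,\ell')$; and since a leg is shorter than the hypotenuse, $|qb| < |p_0 b|$, whence $\operatorname{dist}(a,\ell') \le \operatorname{dist}(q,\ell') < |p_0 q|$. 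Finally $\ell'$ passes through the two points $p_0, b \in P$, while $a \notin \ell'$ (as $\ell' \cap \ell_0 = \{b\}$ and $a \ne b$), so $(\ell', a)$ is an admissible pair of strictly smaller distance, contradicting the minimality of $(\ell_0, p_0)$.

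The step I expect to demand the most care is the bookkeeping around the foot $q$: verifying that two of the collinear points genuinely fall on one closed ray from $q$, and checking that the inequality remains \emph{strict} in every configuration, including the degenerate case $a = q$, where $\operatorname{dist}(a,\ell') = \operatorname{dist}(q,\ell')$ still satisfies the strict bound thanks to $|qb| < |p_0 b|$. Once these details are pinned down, the remainder is routine Euclidean geometry.
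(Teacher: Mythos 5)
Your argument is correct and complete: it is Kelly's classical extremal-distance proof, and you have handled the two points that usually cause trouble --- the nonemptiness/finiteness of the set of admissible pairs $(\ell,p)$, and the strictness of the inequality $\operatorname{dist}(a,\ell') \le \operatorname{dist}(q,\ell') = |p_0q|\,|qb|/|p_0b| < |p_0q|$ even in the degenerate case $a=q$ (where one still has $|qb|>0$ because $a$ and $b$ are distinct points of the same closed ray). The paper itself offers no proof of this statement: it is quoted as a classical theorem, with the reader referred to Green--Tao and the references therein for its history, so there is no argument in the paper to compare yours against. Your write-up is a standard, self-contained justification of exactly the fact the paper takes for granted before applying it to the modular hyperbolas $\cH_n$.
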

 
We refer the reader to~\cite{GT} and the references therein for an exposition of the history of this theorem and subsequent developments. We now give an application of the Sylvester-Gallai theorem to modular hyperbolas.

\begin{lemma}
The only moduli for which the modular hyperbolas $\cH_n$ do \emph{not} span an ordinary line are $n=2,8,12$ and $24$.
\end{lemma}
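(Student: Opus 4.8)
The plan is to reduce the lemma to a question about when the points of $\cH_n$ are collinear, and then to settle that question using the reflection symmetry of the hyperbola together with a short group-theoretic computation. First I would record the basic structure: $\cH_n = \{(x,\,x^{-1}\bmod n) : x \in \Za_n\}$, so $|\cH_n| = \phi(n)$, and the set is invariant under the reflection $(x,y) \mapsto (y,x)$ across the line $y=x$ (because the relation $xy \equiv 1$ is symmetric in $x$ and $y$). I would also note that the point $(1,1)$ always lies in $\cH_n$.

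Next I would spell out exactly what it means for a finite planar set to fail to span an ordinary line, invoking the Sylvester--Gallai theorem. If $|\cH_n|=1$ there is no line through two points; if $|\cH_n|=2$ the unique line through them is ordinary; if $|\cH_n|\ge 3$ and the points are not all collinear then Sylvester--Gallai produces an ordinary line, whereas if they are all collinear then that common line carries at least three points and no ordinary line exists. Hence $\cH_n$ fails to span an ordinary line precisely when either $\phi(n)=1$ or $\phi(n)\ge 3$ with all points collinear. Since $\phi(n)=1$ forces $n=2$, the crux is to decide when the points are all collinear.

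Here the symmetry does the work. A line carrying the entire set, when that set has at least two points (so it determines a unique line), must be fixed setwise by the reflection across $y=x$, hence is either the axis $y=x$ itself or a line of slope $-1$. A slope $-1$ line through $(1,1)$ is $x+y=2$, which meets $[1,n-1]^2$ only at $(1,1)$; so whenever $\phi(n)\ge 2$ the only possibility is that every point lies on $y=x$, i.e.\ $x \equiv x^{-1}$, equivalently $x^2 \equiv 1 \pmod n$, for every unit $x$. This says $\Za_n$ has exponent dividing $2$, and the standard structure of $\Za_n$ (cyclic of order $p^{a-1}(p-1)$ for odd prime powers, and $\Za_{2^k}\cong \Z/2\times\Z/2^{k-2}$ for $k\ge 3$) shows this holds exactly when $n \mid 24$, that is $n \in \{1,2,3,4,6,8,12,24\}$.

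Finally I would assemble the cases. Among these values $\phi(n)\ge 3$ only for $n\in\{8,12,24\}$, while $n=3,4,6$ give just two diagonal points and therefore span an ordinary line; together with the single-point case $n=2$ this isolates exactly $n\in\{2,8,12,24\}$, and every other modulus has points not all collinear and hence spans an ordinary line by Sylvester--Gallai. I expect the only genuine obstacle to be the bookkeeping of the case analysis---in particular keeping the low-cardinality cases $\phi(n)\le 2$ cleanly separated from the genuinely collinear cases---since both the geometric input (symmetry forcing the diagonal) and the number-theoretic input (classifying $n$ for which $\Za_n$ has exponent $2$) are routine.
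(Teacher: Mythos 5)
Your proof is correct and follows essentially the same route as the paper: handle the low-cardinality moduli separately, reduce the remaining cases to collinearity via Sylvester--Gallai, and observe that all points of $\cH_n$ lie on $y=x$ exactly when every unit satisfies $x^2\equiv 1\pmod n$, i.e.\ when $n\mid 24$. The only local difference is that you rule out a common line other than $y=x$ by the reflection-symmetry argument, whereas the paper simply exhibits a point off the diagonal together with $(1,1)$ and $(n-1,n-1)$ for every $n$ outside the exceptional list; both steps are sound.
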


\begin{proof}
We may assume that $n\not=2,3,4,6$ since for $n=2$ the modular hyperbola consists of only one point and in the case of $n=3,4$ or $6$ 
the modular hyperbola consists of only two points and so for these $3$ cases we have precisely one ordinary line. 

The points $(1,1)$ and $(n-1,n-1)$ are two distinct points of $\cH_n$, and consequently $\cH_n$ spans the line $y=x$. We now observe that the number of solutions of the congruence $z^2 \equiv 1 \pmod{n}$ equals $\varphi(n)$ precisely when $n=2,3,4,6,8,12$ and $24$. For all other values of $n$ there exists $z \in \Z_n^\ast$ such that $z^2 \not\equiv 1 \pmod{n}$. Such a $z$ gives a point in $\cH_n$ that does not lie on $y =x$. We now invoke the Sylvester-Gallai theorem to conclude our proof.
\end{proof}

For prime moduli it is easy to determine the precise number of ordinary lines. 

\begin{lemma}\label{prime-moduli}
Let $p$ be a prime. Then the set $\cH_{a,p}$ spans $(p-1)(p-2)/2$ ordinary lines.
\end{lemma}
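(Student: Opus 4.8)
The plan is to show that for a prime modulus $p$, the set $\cH_{a,p}$ consists of exactly $p-1$ points, and that these points are in "general enough" position that the only lines containing three or more of them are rare and can be counted exactly. First I would note that since $p$ is prime, for each $x \in \{1,\dots,p-1\}$ there is a unique $y \in \{1,\dots,p-1\}$ with $xy \equiv a \pmod p$, namely $y \equiv a x^{-1}$. Hence $|\cH_{a,p}| = p-1$, and the total number of lines through pairs of these points is at most $\binom{p-1}{2}$. The task is to determine how many of these $\binom{p-1}{2}$ lines are genuinely ordinary (pass through exactly two points) versus how many are "rich" lines carrying three or more points, since each rich line collapses several pairs into a single line and reduces the ordinary-line count.

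The key geometric step is to understand collinearity among points of $\cH_{a,p}$. I would take three distinct points $(x_1, y_1), (x_2, y_2), (x_3, y_3)$ on the hyperbola and write down the collinearity determinant condition. Since $y_i = a x_i^{-1}$ over $\mathbb{F}_p$ but the actual coordinates are the least residues (honest integers in $[1,p-1]$), the subtle point is that collinearity is an \emph{integer} (Euclidean) condition, not a modular one. I expect that the main obstacle is precisely this tension: one must rule out three least-residue points being Euclidean-collinear except in a controlled way. The natural approach is to argue that the curve $xy = a$ (or its modular version) is a conic, and a line meets a conic in at most two points, so no three points of $\cH_{a,p}$ that lie on a genuine branch of $xy=a$ can be collinear—but because we reduced mod $p$, the integer points are scattered and I would need to verify that no Euclidean line picks up three of them spuriously.

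The cleanest route is likely to observe that three points $(x_i, a x_i^{-1})$ are Euclidean-collinear only if a certain small-height integer relation holds, and to show this forces a contradiction with $x_i \in [1,p-1]$, so that \emph{every} line spanned by $\cH_{a,p}$ is ordinary except possibly for a few forced coincidences. If in fact no three points are ever collinear, then every one of the $\binom{p-1}{2} = (p-1)(p-2)/2$ pairs determines a distinct ordinary line, giving the stated count immediately. I would therefore structure the proof as: (1) establish $|\cH_{a,p}| = p-1$; (2) prove that no three points of $\cH_{a,p}$ are collinear in the Euclidean plane; (3) conclude that all $\binom{p-1}{2}$ pairwise lines are ordinary and distinct, yielding exactly $(p-1)(p-2)/2$.

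The crux, and the step I expect to demand the most care, is step (2). I would try to prove it by assuming $(x_1,y_1), (x_2,y_2), (x_3,y_3)$ are collinear, expanding the $3\times 3$ vanishing determinant, and substituting $y_i = a x_i^{-1} \bmod p$ together with the exact integer relations $x_i y_i = a + p k_i$ for suitable nonnegative integers $k_i$. The goal is to derive a congruence modulo $p$ that, combined with the size constraints $1 \le x_i, y_i \le p-1$, cannot be satisfied by three distinct points. I anticipate the determinant will factor in a way that exhibits a symmetric expression in the $x_i$ whose reduction mod $p$ must vanish but whose genuine integer value is too small in absolute value to be a nonzero multiple of $p$—forcing two of the points to coincide, a contradiction. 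Carrying out this bounding argument cleanly, and handling the small exceptional moduli if any arise, is where the real work lies.
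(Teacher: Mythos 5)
Your overall structure matches the paper's (count the points, show no three are collinear, conclude that all $\binom{p-1}{2}$ pairwise lines are ordinary), but your route to the collinearity statement is genuinely different from the paper's and, more importantly, you have left the decisive step as a plan rather than carrying it out. The good news is that your plan does work, and without the delicate height/bounding argument you anticipate. The "tension" you worry about between Euclidean and modular collinearity dissolves at once: if three lattice points are Euclidean-collinear, the integer linear relation they satisfy holds in particular modulo $p$. Expanding your $3\times 3$ determinant and substituting $y_i\equiv ax_i^{-1}\pmod p$, then multiplying by $x_1x_2x_3$, gives
\begin{equation*}
x_1x_2x_3\det\begin{pmatrix} x_1 & y_1 & 1\\ x_2 & y_2 & 1\\ x_3 & y_3 & 1\end{pmatrix}\equiv -a\,(x_1-x_2)(x_2-x_3)(x_3-x_1)\pmod p,
\end{equation*}
so collinearity forces $p$ to divide one of the differences $x_i-x_j$; since $1\le x_i\le p-1$, that difference is $0$ and two points coincide. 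Note that the size constraint enters only in this trivial final step (distinct least residues differ by less than $p$); there is no point at which you need the determinant itself to be "too small to be a nonzero multiple of $p$" -- it can be enormous -- and if you pursue a magnitude estimate on the whole determinant you will get stuck. Also, no exceptional small moduli arise.

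The paper's argument reaches the same conclusion more cheaply: writing the line as $y=kx+d$ and substituting $y\equiv ax^{-1}$, the $x$-coordinates of all points of $\cH_{a,p}$ on the line are roots modulo $p$ of the quadratic $kx^2+dx-a$, which by Lagrange's theorem has at most two roots. Your determinant computation is a legitimate alternative -- it is symmetric in the three points and avoids any discussion of the slope -- but as submitted the proof is incomplete until that factorization (or an equivalent) is actually exhibited.
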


\begin{proof}
We show that any line connecting $2$ distinct points of $\cH_{a,p}$ is ordinary. Let $(x_1,y_1), (x_2,y_2)$ be two distinct points in $\cH_{a,p}$, and let $y=kx+d$ be the line in $\R^2$ 
passing through these two points. Since $x_1 \not= x_2$,  $x_1$ and $x_2$ are distinct roots modulo $p$ of the quadratic polynomial $kx^2+dx-a$. By Lagrange's theorem $kx^2+dx-a$ has no more than $2$ roots modulo $p$. 
Hence, no other point of $\cH_{a,p}$ lies on $y=kx+d$. Since $\#\cH_{a,p} = p-1$, $\cH_{a,p}$ spans ${p-1 \choose 2}$ ordinary lines.
\end{proof}

For the rest of this section we focus on the case $a=1$ and notice that
for prime powers $p^m$ with $m \geq 2$ (and $p^m \not= 4,9$) such a result no longer holds as $\cH_{p^m}$ spans lines that are not ordinary. In particular we have the following example.

\begin{lemma}\label{many-coll-pts}
Let $p$ be a prime and let $m \in \Z$ with $m \geq 2$ and $p^m >8$. Then $\cH_{p^m}$ spans a line with  $\left(p^{\lfloor m/2 \rfloor}-1\right)$ points.
\end{lemma}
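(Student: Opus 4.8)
The plan is to exhibit an explicit line together with $p^{\lfloor m/2\rfloor}-1$ points of $\cH_{p^m}$ lying on it. The guiding observation is that the inversion map $x \mapsto x^{-1} \pmod{p^m}$ becomes \emph{affine} once $x$ is restricted to the residue class $1 \pmod{p^{\lceil m/2\rceil}}$. Write $k = \lfloor m/2\rfloor$ and $j = \lceil m/2\rceil$, so that $j + k = m$ and $2j \ge m$. If $x \equiv 1 \pmod{p^j}$, then $(x-1)^2 \equiv 0 \pmod{p^{2j}}$ and hence $(x-1)^2 \equiv 0 \pmod{p^m}$; expanding $x(2-x) = 1 - (x-1)^2$ then shows $x^{-1} \equiv 2 - x \pmod{p^m}$. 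Thus every such $x$ yields a hyperbola point whose $y$-coordinate is congruent to $2 - x$, and I expect the sum $x + y$ to be pinned to a single constant once we reduce to least residues.

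Next I would enumerate the admissible $x$. The residues in $[1, p^m-1]$ with $x \equiv 1 \pmod{p^j}$ are exactly $x = 1 + up^j$ for $u = 0, 1, \dots, p^k - 1$ (there are $p^{m-j} = p^k$ of them, each a unit since it is $\equiv 1 \pmod p$). The value $u = 0$ gives the point $(1,1)$, which I discard. For $1 \le u \le p^k - 1$ the number $2 - x = 1 - up^j$ is negative, so its least residue is $y = 1 - up^j + p^m$, and a direct computation gives $x + y = p^m + 2$ for every such $u$, whereas $(1,1)$ has $x+y = 2$. Hence the $p^k - 1$ points with $u \ge 1$ all lie on the single line $x + y = p^m + 2$. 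It then remains to verify that their coordinates stay inside $[1, p^m-1]$ and that $x_u y_u \equiv 1 \pmod{p^m}$ (the cross term in the product carries a factor $p^{2j} \equiv 0 \pmod{p^m}$). This produces a line through $p^{\lfloor m/2\rfloor} - 1 = p^k - 1$ points of $\cH_{p^m}$.

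There is no genuine analytic difficulty here; the main obstacle is purely conceptual, namely recognizing that the (generically nonlinear) inversion map collapses to a linear relation precisely on the class $1 \pmod{p^{\lceil m/2\rceil}}$, which is exactly what forces the points to be collinear. Everything else — the range check, the unit condition, and the congruence $xy \equiv 1$ — is routine bookkeeping with $p$-adic valuations. Finally, I would remark that the hypotheses $m \ge 2$ and $p^m > 8$ are precisely what guarantee $p^k - 1 \ge 2$, so that the constructed set genuinely determines a line; conversely, solving $(x-1)^2 \equiv 0 \pmod{p^m}$ shows these are the \emph{only} points of $\cH_{p^m}$ on $x + y = p^m + 2$, so the count $p^{\lfloor m/2\rfloor}-1$ is in fact exact.
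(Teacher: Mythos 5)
Your construction is correct and is essentially the paper's own proof: both identify the line $x+y=p^m+2$ and reduce membership of $(x,\,p^m+2-x)$ in $\cH_{p^m}$ to the congruence $(x-1)^2\equiv 0\pmod{p^m}$, yielding exactly the $p^{\lfloor m/2\rfloor}-1$ points $x=1+up^{\lceil m/2\rceil}$ with $1\le u\le p^{\lfloor m/2\rfloor}-1$. The only (immaterial) difference is direction of argument — you build the points and verify collinearity, while the paper fixes the line and solves for its hyperbola points — and your closing remark about exactness of the count is the same observation.
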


\begin{proof}
We include the hypothesis $p^m >8$ to ensure that 
$$\left(p^{\lfloor m/2 \rfloor}-1\right) \ge 2.$$ 
Let $l$ be the line $L: x+y = p^m+2.$ We will show that 
$$\#\left(\cH_{p^m} \cap L\right) = p^{\lfloor m/2 \rfloor}-1.$$

The lattice points on the line $L$ that lie inside the first quadrant are of the form $(k,p^m+2-k)$ with 
$k=1,2,\dots, p^m,p^m+1.$ If $(k,p^m+2-k)\in \cH_{p^m}$, then 
$$k(2-k) \equiv 1 \!\!\!\! \pmod{p^m},$$
that is,  
$$(k-1)^2 \equiv 0 \!\!\!\! \pmod{p^m}.$$
Therefore, 
$$k-1 = lp^{\lceil m/2\rceil}$$ 
with $l= 1,2, \ldots, \left(p^{\lfloor m/2\rfloor}-1\right).$
\end{proof} 

Since $\left(3^{\lfloor 3/2 \rfloor}-1\right) =2$, the above proof does not show that there are non-ordinary lines for the case $p^m =27$. For this case we note that the line $x+y=38$ contains 4 points of $\cH_{27}$. 

We now state the main result of this section. 

\begin{theorem}\label{Main-result}
Let $p^m$ be a prime power and $N$ the number of ordinary lines that $\cH_{p^m}$ spans. Then
\begin{equation}\label{eq:low-bd-ord-line}
N  \geq  p^{m-1}(p-1)\left(\frac{p^{m-1}(p-2)}{2} + c(p^m)\right),
\end{equation}
where 
\begin{enumerate}
\item[(i)] $c(p)=0$, $c(4)=1/2$, $c(8)=0$, $c(49)=6/7$;
\item[(ii)] $c(p^m)=6/13$ if $m\ge 2$, $p^m$ is small and $p^m\not= 4,8,49$;
\item[(iii)] $c(2^m)=1/2$ if $m$ is sufficiently large;
\item[(iv)] $c(p^m)=3/4+o(1)$ if $p>2$, $m\ge 2$ and $p^m$ is sufficiently large.
\end{enumerate}
In the special case of $m=1$ we have equality. (We also have equality when $p^m=4,8,49$.)
\end{theorem}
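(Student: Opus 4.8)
The plan is to sort the point pairs of $\cH_{p^m}$ by the residue of their $x$-coordinates modulo $p$, to show that a line can be non-ordinary only when all of its points share one such residue, and then to count the ordinary lines inside each residue class by invoking quantitative versions of the Sylvester--Gallai theorem. Throughout write $\varphi=\varphi(p^m)=p^{m-1}(p-1)=\#\cH_{p^m}$; a line meeting $\cH_{p^m}$ in $k\ge 2$ points accounts for $\binom{k}{2}$ of the $\binom{\varphi}{2}$ pairs, and the ordinary lines are exactly those with $k=2$.

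The structural heart of the argument is the claim that every non-ordinary line is contained in a single class $C_{x_0}=\{(x,y)\in\cH_{p^m}:x\equiv x_0\!\!\pmod p\}$. For three points $(x_i,y_i)\in\cH_{p^m}$ we have $y_i\equiv x_i^{-1}\pmod{p^m}$, and multiplying the $i$-th row of their collinearity determinant by the unit $x_i$ converts it (up to reordering columns) into a Vandermonde determinant; genuine collinearity in $\R^2$ makes the determinant vanish and therefore forces $\prod_{i<j}(x_i-x_j)\equiv 0\pmod{p^m}$. Since each difference has absolute value less than $p^m$, a short $p$-adic valuation argument shows the only way to reach total valuation $\ge m$ is for all three differences to be divisible by $p$, so the three $x$-coordinates are congruent modulo $p$. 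Consequently a line through points of two different classes is automatically ordinary, the $p-1$ classes carry all the non-ordinary structure, and counting the cross-class pairs gives the exact identity
\[
N=\binom{\varphi}{2}-(p-1)\binom{p^{m-1}}{2}+\sum_{x_0}N_{x_0}
 =\varphi\,\frac{p^{m-1}(p-2)}{2}+\sum_{x_0}N_{x_0},
\]
where $N_{x_0}$ is the number of ordinary lines spanned \emph{internally} by the $p^{m-1}$ points of $C_{x_0}$. Thus \eqref{eq:low-bd-ord-line} is equivalent to the savings bound $\sum_{x_0}N_{x_0}\ge \varphi\,c(p^m)$.

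Next I would apply quantitative Sylvester--Gallai estimates to each class separately. As long as $C_{x_0}$ is not collinear, the Csima--Sawyer theorem furnishes at least $\tfrac{6}{13}p^{m-1}$ ordinary lines whenever $p^{m-1}\neq 7$, and summing over the $p-1$ classes produces case (ii). For large configurations the Green--Tao theorem on sets with few ordinary lines sharpens this to at least $\tfrac12 p^{m-1}$ lines when $p^{m-1}$ is even, giving $c(2^m)=\tfrac12$ in case (iii), and to $(\tfrac34+o(1))p^{m-1}$ lines when $p^{m-1}$ is odd, giving case (iv). To license these theorems I must check that the classes really are two-dimensional. Writing $x=x_0+tp$ with $0\le t<p^{m-1}$ and expanding the inverse, $y\equiv\sum_{j=0}^{m-1}(-1)^j p^j x_0^{-(j+1)}t^j\pmod{p^m}$, exhibits $C_{x_0}$ as the graph over $t$ of a polynomial of degree $m-1$ reduced modulo $p^m$; its collinearity would force an unusually long collinear subset of $\cH_{p^m}$, which I would exclude by the same valuation bookkeeping that underlies Lemma~\ref{many-coll-pts}.

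The main obstacle is the exceptional bookkeeping that forces the listed constants and the equality cases. The anomaly $c(49)=6/7$ is exactly the size $p^{m-1}=7$ excluded by Csima--Sawyer: for $p^m=49$ I would instead analyse each seven-point class by hand and show it spans exactly $6$ ordinary lines, yielding equality. The degenerate moduli are those for which a class fails to be two-dimensional: when $p^m=8$ the unique class consists of the four collinear points of $y=x$, so there are no internal ordinary lines and $c=0$, consistent with the earlier lemma that $\cH_8$ spans no ordinary line, while for $p^m=4$ the single two-point class contributes its one ordinary line and $c=1/2$. I expect the remaining effort to concentrate on three points: (a) proving non-collinearity of every class in the generic range, since the bounds are vacuous otherwise; (b) controlling the $o(1)$ term of case (iv), which absorbs both the lower-order term of the Green--Tao bound and the few classes $x_0\equiv\pm1\pmod p$ that come closest to collinearity; and (c) the finite computations pinning down equality for $m=1$ and for $p^m\in\{4,8,49\}$.
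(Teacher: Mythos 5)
Your overall architecture coincides with the paper's: partition $\cH_{p^m}$ into the $p-1$ residue classes $C_i$, show that any line meeting two distinct classes is ordinary, count the cross-class pairs exactly, and feed each non-collinear class into Csima--Sawyer or Green--Tao. Your proof of the key structural fact is genuinely different, though, and is correct: the paper (Lemma~\ref{useful1}) attaches to a line the quadratic congruence $ax^2+cx+b\equiv 0\pmod{p^m}$ and argues via Hensel's lemma on its discriminant, whereas you multiply the rows of the collinearity determinant by the units $x_i$ to produce a Vandermonde determinant and deduce $\prod_{i<j}(x_i-x_j)\equiv 0\pmod{p^m}$; the ultrametric inequality then forces all three $x$-coordinates into one class, since otherwise at least two of the three valuations vanish and the remaining one is at most $m-1$. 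This is more elementary than the paper's route and, unlike Lemma~\ref{useful1}, which is stated only for $p>2$, it works uniformly in $p$ and in $a$. Your exact pair-counting identity for $N$ is also a slightly cleaner way to package the cross-class contribution than the paper's inequality.

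The genuine gap is the non-collinearity of the classes $C_i$ when $m=2$, which you defer to ``the same valuation bookkeeping that underlies Lemma~\ref{many-coll-pts}.'' That bookkeeping yields $\#(L\cap\cH_{p^m})\le 2p^{\lfloor m/2\rfloor}$ (Proposition~\ref{ptline-upperbound}), which rules out a collinear class only when $p^{m-1}>2p^{\lfloor m/2\rfloor}$, i.e.\ for $m\ge 3$; the paper says this explicitly. For $m=2$ each class has $p$ points while a line may a priori carry up to $2p$, so the bound is vacuous --- and worse, your own expansion $y\equiv x_0^{-1}-px_0^{-2}t\pmod{p^2}$ shows that for $m=2$ the class is the graph of a \emph{degree-one} polynomial modulo $p^2$, so the points are ``collinear mod $p^2$'' and only the wrap-around of the least-residue reduction breaks genuine collinearity in $\R^2$. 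The paper supplies a separate argument here (Lemma~\ref{useful2}): reflecting in the symmetry line $y=x$ forces a collinear class to lie on a line of integer slope whose reciprocal is also an integer, hence of slope $\pm1$, and each of those two slopes is then excluded by hand. Some such argument must be added, since without it your lower bound is empty for every modulus $p^2$. The remaining exceptional bookkeeping ($p^m=4,8,49$, the automatic equality at $m=1$, and the even/odd dichotomy in Green--Tao) is handled correctly and matches what the paper does.
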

 
Our proof does not include the special cases $p^m = 4,8$ and $49$. For these 3 cases we simply computed the value of $c(n)$ that gives equality. The basic idea of the proof of Theorem~\ref{Main-result} is to decompose $\cH_{p^m}$ into $p-1$ distinct subsets $C_i,i=1,\dots,p-1,$ such that any line that connects a point in $C_i$ to a point in $C_j$, with $i\not=j$, must be ordinary. We define the $C_i$ follows:
$$C_i = \left\{ (x,y) \in \cH_{p^m} \, : \, x \equiv i \!\!\!\! \pmod{p} \right\},$$ where $1 \le i \le p-1$.
We note that $\# C_i = p^{m-1}$. We now state and prove the key lemma of this section.

\begin{lemma}\label{useful1}
Let $m\geq 2$ be an integer, $p>2$ be a prime and let $L$ be a line 
$$ L: ax +by +c = 0, \textrm{ with } \gcd(a,b,c)=1,$$
that is spanned by $\cH_{p^m}$. Then we have the following:
\begin{enumerate}
\item[(i)] $c^2 - 4ab \equiv 0 \pmod{p}$ if and only if $L \cap \cH_{p^m} \subseteq  C_i$ with 
$$i = -c(2a)^{-1} \!\!\!\! \mod p.$$
\item[(ii)] $\gcd(ab,p) = 1$.
\item[(iii)] If $\#(L \cap \cH_{p^m}) \ge 3$, then $L \cap \cH_{p^m} \subseteq  C_i$ for some $i$.
\item[(iv)] If $\#(L \cap \cH_{p^m}) \ge 3$, then $\gcd(c,p) = 1$. 
\item[(v)] If $c=0$, then $L$ is the line $y=x$.
\end{enumerate}

\end{lemma}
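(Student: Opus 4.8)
The plan is to convert membership of a point in $L \cap \cH_{p^m}$ into a single quadratic congruence modulo $p^m$, and then extract all five assertions from the elementary theory of such congruences, namely discriminants together with Hensel lifting of simple roots. First I would record the central computation. Since $L$ is spanned it is neither horizontal nor vertical, because a horizontal or vertical line meets $\cH_{p^m}$ in at most one point (fixing one coordinate determines the other uniquely modulo $p^m$); hence $a \neq 0$ and $b \neq 0$ as integers. If $(x,y) \in L \cap \cH_{p^m}$, then $ax + by + c = 0$ holds exactly while $xy \equiv 1 \pmod{p^m}$, so multiplying the line equation by $x$ and substituting $xy \equiv 1$ gives
\[ a x^2 + c x + b \equiv 0 \pmod{p^m}, \]
and, symmetrically, multiplying by $y$ gives $b y^2 + c y + a \equiv 0 \pmod{p^m}$. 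Thus the $x$-coordinates (resp.\ $y$-coordinates) of the points of $L \cap \cH_{p^m}$ are roots of $f(X) = aX^2 + cX + b$ (resp.\ of $bY^2 + cY + a$) modulo $p^m$.

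Next I would prove (ii), which is the backbone. Suppose $p \mid a$. Reducing $f$ modulo $p$ leaves the linear polynomial $cX + b$. If $p \mid c$ as well, then the existence of a root forces $p \mid b$, contradicting $\gcd(a,b,c) = 1$. If $p \nmid c$, then $f'(X) = 2aX + c \equiv c \not\equiv 0 \pmod p$ at the unique root of $f$ modulo $p$, so by Hensel's lemma that root lifts to a unique root modulo $p^m$; hence all points of $L \cap \cH_{p^m}$ share a single $x$-coordinate and $L$ contains at most one point, contradicting that $L$ is spanned. Therefore $p \nmid a$, and the symmetric argument applied to $bY^2 + cY + a$ gives $p \nmid b$.

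With $p \nmid ab$ secured (and $p$ odd), $f$ is a genuine quadratic modulo $p$ with discriminant $c^2 - 4ab$ and, when this vanishes, a double root at $X \equiv -c(2a)^{-1} \pmod p$. From here the remaining parts follow by root counting. If $c^2 - 4ab \not\equiv 0 \pmod p$, then $f$ has two distinct simple roots (or none) modulo $p$, each lifting uniquely, so $f$ has at most two roots modulo $p^m$, with distinct residues modulo $p$; if $c^2 - 4ab \equiv 0 \pmod p$, every root of $f$ modulo $p^m$ reduces to $-c(2a)^{-1} \pmod p$. This yields (i): vanishing discriminant forces every $x$-coordinate into the single class $i = -c(2a)^{-1}$, while conversely, if two points of $L$ lie in a common $C_i$ but the discriminant is nonzero, the relevant simple root lifts to a single $x$-value modulo $p^m$, forcing the two points to coincide. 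Part (iii) is then the contrapositive of the root count (three or more points cannot arise from a nonvanishing discriminant, so (i) applies), and (iv) follows because $c^2 \equiv 4ab \pmod p$ together with $p \mid c$ would give $p \mid ab$, contradicting (ii).

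Finally, for (v) I would use the exact line equation rather than the congruence. If $c = 0$ then $f(X) = aX^2 + b$, whose roots modulo $p^m$ are $\pm x_0$; the corresponding points are $(x_0, y_0)$ and its reflection $(p^m - x_0,\, p^m - y_0)$. Imposing that both satisfy $ax + by = 0$ exactly and subtracting yields $(a+b)p^m = 0$, hence $a = -b$, and since $\gcd(a,b) = 1$ this forces $\{a,b\} = \{1,-1\}$ and $L : y = x$. I expect the main obstacle to be (ii), specifically the case $p \mid a$, $p \nmid c$, where one must invoke Hensel's lemma through the nonvanishing derivative to see that the single linear root modulo $p$ lifts to exactly one residue modulo $p^m$, so that the spannedness hypothesis is contradicted. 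The bookkeeping distinguishing ``$\equiv 0 \pmod{p^m}$'' (which the congruence delivers) from ``$= 0$'' (which actual incidence on $L$ requires) is the subtle point to watch, and it is precisely what makes the exact-equation argument necessary in (v).
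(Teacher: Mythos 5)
Your proof is correct, and it rests on the same foundation as the paper's: every point of $L \cap \cH_{p^m}$ yields a root of $f(X)=aX^2+cX+b$ modulo $p^m$, and all five claims are extracted from the discriminant $c^2-4ab$ together with Hensel lifting. The differences are in the logical organization, and they are genuine. The paper proves the reverse implication of (i) by an explicit finite-difference computation: from $f(x_2)-f(x_1)=\bigl(2ax_1+c+a(x_2-x_1)\bigr)(x_2-x_1)$ and a valuation argument it deduces $2ax_1\equiv -c$ and, symmetrically, $2by_1\equiv -c \pmod p$ whenever two distinct points share a residue class, and multiplies these congruences (using $x_1y_1\equiv 1$) to get $c^2\equiv 4ab\pmod p$; part (ii) is then derived as a consequence of that computation. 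You invert the dependency: you prove (ii) first and independently --- if $p\mid a$ and $p\nmid c$, the reduction of $f$ is linear with unit derivative, so Hensel forces a single $x$-coordinate on $L\cap\cH_{p^m}$ and contradicts spannedness --- and then obtain the reverse direction of (i) from the uniqueness of Hensel lifts of the two simple roots when the discriminant is a unit. This is cleaner, since it replaces the paper's hands-on argument (which must separately handle $x_1\not\equiv x_2\pmod{p^m}$) with a standard root count, at the cost of not producing the companion congruence $2by_1\equiv -c\pmod p$, which this lemma does not in fact need. Your (v), summing the two exact incidence equations at $(x_0,y_0)$ and $(p^m-x_0,p^m-y_0)$ to get $(a+b)p^m=0$, replaces the paper's slope identity $-b/a=y_1/x_1=(p^m-y_1)/(p^m-x_1)$ and is equally valid; and you correctly isolate the one delicate point, namely that the congruence is necessary but not sufficient for incidence, which is precisely why (v) must return to the exact equation of $L$.
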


\begin{proof}
Let $f(x)$ denote the polynomial $ax^2+cx+b$.

(i) If $c^2 - 4ab \equiv 0 \pmod{p}$ then by the quadratic formula
$$2ax \equiv -c \!\!\!\! \pmod{p}$$ for any $(x,y) \in L \cap \cH_{p^m}$. If $p|a$ then $p|c$. By considering the equation of $L$ we conclude that $p|b$. This contradicts our hypothesis that
$\gcd(a,b,c) = 1$. Thus, $\gcd(a,p)=1$ and $x \equiv -c(2a)^{-1} \pmod{p}$ for every $(x,y) \in L \cap \cH_{p^m}$.

For the opposite direction, we show that if 
$$(x_1,y_1),(x_2,y_2) \in L \cap \cH_{p^m}$$
with $x_1\ne x_2$ (and thus $y_1\ne y_2$) and $x_1 \equiv x_2 \pmod{p}$, then 
$$2ax_1 \equiv -c \!\!\!\! \pmod{p} \textrm{ and } 2by_1 \equiv -c \!\!\!\! \pmod{p}.$$ The result then
follows by multiplying these two congruences together.

We prove the first congruence. Now,
$$a(x+h)^2+c(x+h) +b = (ax^2+cx+b)+(2ax+c)h + ah^2.$$
Setting $x=x_1$ and $h=x_2-x_1$ we obtain
$$ax_2^2+cx_2+b = (ax_1^2+cx_1+b)+(2ax_1+c)(x_2-x_1)+a(x_2-x_1)^2.$$
Since $f(x_1) \equiv f(x_2) \equiv 0 \pmod{p^m}$, we get 
$$(2ax_1+c+a(x_2-x_1))(x_2-x_1) \equiv 0 \!\!\!\! \pmod{p^m}.$$
Since $x_1\equiv x_2 \pmod{p}$, but $x_1 \not\equiv x_2 \pmod{p^m}$, we infer that 
$$2ax_1+c+a(x_2-x_1) \equiv 0 \!\!\!\! \pmod{p^l}$$
for some $l,0<l<m$, and conclude that 
$$2ax_1 +c \equiv 0 \!\!\!\! \pmod{p}.$$
A similar proof yields $2by_1 \equiv - c \pmod{p}$.

(ii) We argue by contradiction. Suppose $\gcd(ab,p) = p$. Without loss of generality  we may assume that $p|a$.  Let $(x_1,y_1)$ and $(x_2,y_2)$ be two distinct points in $L \cap \cH_{p^m}$. 
Therefore $$f(x_1) \equiv f(x_2) \equiv 0 \!\!\!\! \pmod{p^m}.$$ 
Since $p|a$, $f(x)$ reduces modulo $p$ to the linear polynomial $cx +b$. Since $x_1$ and $x_2$ are both zeros of the congruence 
$cx + b \equiv 0 \pmod{p}$, we conclude that $x_1 \equiv x_2 \pmod{p}$. By part (i) we now obtain
$$-c \equiv 2ax_1 \equiv 0 \!\!\!\! \pmod{p}.$$
Since $p|a$, we have that $p|c$ and consequently $p|b$. But this contradicts our hypothesis that $\gcd(a,b,c) = 1$. 

(iii) Since $\#(L \cap \cH_{p^m}) \ge 3,$ we have by (ii) that $\gcd(ab,p)=1$. Now using the modulo $p^m$ invertible linear transformation  $z=2ax+c$ we transform the congruence $f(x) \equiv 0 \pmod{p^m}$ to 
$$ z^2\equiv (c^2-4ab) \!\!\!\! \pmod{p^m}.$$
If $\gcd(c^2-4ab,p)=1$, then the congruence $z^2\equiv (c^2-4ab) \pmod{p}$ would have precisely 2 solutions and by Hensel's lemma each one would lift to a unique solution of  $z^2\equiv (c^2-4ab)\pmod{p^m}$. This in turn would imply 
$f(x) \equiv 0 \pmod{p^m}$ has precisely 2 solutions. This contradicts our hypothesis that $f(x) \equiv 0 \pmod{p^m}$ has at least 3 solutions. Consequently, we must have that $(c^2-4ab) \equiv 0 \pmod{p}$, and the result follows from (i).

(iv) By (iii) $c^2\equiv 4ab \pmod{p}$. Since $\gcd(ab,p)=1$, $p\not|c$.

(v) By (iv) $\#(L\cap \cH_{p^m}) =2$. Let $(x_1,y_1)$ and $(x_2,y_2)$ be the two elements of the intersection 
$L\cap \cH_{p^m}$. From the fact that $x_1$ and $x_2$ are the two distinct solutions of $x^2 \equiv -ba^{-1} \pmod{p^m}$ we conclude that $x_2 = p^m-x_1$ and $y_2 = p^m-y_1$. Furthermore the slope of $L$ is
$$ -\frac{b}{a} = \frac{y_1}{x_1} = \frac{p^m-y_1}{p^m-x_1} .$$
From the last equality we get that $x_1=y_1$ and thus $b=-a$, that is,  $L$ is the line $y=x$.
\end{proof}

The next proposition gives an upper on the number of points of $\cH_{p^m}$ that can be collinear.

\begin{proposition}\label{ptline-upperbound}
Let $p>2$ be prime and $L$ a line spanned by $\cH_{p^m}$. Then
 \begin{equation}\label{eq:pt-line-up-bd}
\# (L \cap \cH_{p^m}) \leq 2 p^{\lfloor m/2 \rfloor}.
\end{equation}
\end{proposition}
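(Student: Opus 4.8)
The plan is to convert the geometric count into the number of roots of a single quadratic congruence modulo $p^m$, and then to count those roots by keeping track of $p$-adic valuations. First I would fix a line $L\colon ax+by+c=0$ with $\gcd(a,b,c)=1$ that is spanned by $\cH_{p^m}$. If $\#(L\cap\cH_{p^m})\le 2$ the asserted bound is immediate, since $2p^{\lfloor m/2\rfloor}\ge 2$; this in particular disposes of the case $m=1$ by Lemma~\ref{prime-moduli}. So I may assume $L$ carries at least three points of $\cH_{p^m}$, which forces $m\ge 2$, and then Lemma~\ref{useful1}(ii) applies to give $\gcd(ab,p)=1$. In particular $a$ and $b$ are invertible modulo $p^m$ and $L$ is neither horizontal nor vertical.

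Next I would set up the reduction. For a point $(x,y)\in L\cap\cH_{p^m}$ the line equation gives the integer identity $by=-(ax+c)$, so multiplying $xy\equiv 1\pmod{p^m}$ through by $b$ yields $-(ax^2+cx)\equiv b\pmod{p^m}$, that is, $f(x):=ax^2+cx+b\equiv 0\pmod{p^m}$. Since a point of $\cH_{p^m}$ is determined by its $x$-coordinate (as $y\equiv x^{-1}$), the map $(x,y)\mapsto x$ injects $L\cap\cH_{p^m}$ into the solution set of $f(x)\equiv 0\pmod{p^m}$, so it suffices to bound the number of solutions of this quadratic congruence. Because $p>2$ and $\gcd(a,p)=1$, we have $\gcd(2a,p)=1$, so the linear substitution $z=2ax+c$ is invertible modulo $p^m$; completing the square turns $f(x)\equiv 0$ into $z^2\equiv D\pmod{p^m}$ with discriminant $D=c^2-4ab$. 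Thus $\#(L\cap\cH_{p^m})$ is at most the number of square roots of $D$ modulo $p^m$.

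The core of the argument is then the count of square roots modulo $p^m$, organized according to $v:=v_p(D)$, the $p$-adic valuation of $D$. If $D\equiv 0\pmod{p^m}$, the solutions are exactly those $z$ with $p^{\lceil m/2\rceil}\mid z$, of which there are $p^{m-\lceil m/2\rceil}=p^{\lfloor m/2\rfloor}$. If $v<m$ is odd there are no solutions, since $v_p(z^2)$ is always even. If $v=2k<m$ is even, writing $z=p^k w$ and $D=p^{2k}u$ with $\gcd(u,p)=1$ reduces the congruence to $w^2\equiv u\pmod{p^{m-2k}}$, which by Hensel's lemma has at most two solutions modulo $p^{m-2k}$; each such residue $w$ modulo $p^{m-2k}$ accounts for exactly $p^k$ residues $z=p^kw$ modulo $p^m$, so there are at most $2p^k$ solutions in all. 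Since $v=2k<m$ gives $k\le\lfloor (m-1)/2\rfloor\le\lfloor m/2\rfloor$, every case produces at most $2p^{\lfloor m/2\rfloor}$ solutions, which is the claimed bound. (The borderline case $D\equiv 0$ is the extremal one and matches the near-tight example of Lemma~\ref{many-coll-pts}.)

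The step I expect to be the most delicate is the valuation bookkeeping in the final paragraph: verifying that a single root $w$ modulo $p^{m-2k}$ lifts to precisely $p^k$ values of $z$ modulo $p^m$ (writing $w=w_0+tp^{m-2k}$ makes $z=p^kw_0+tp^{m-k}$ run over $p^k$ distinct classes mod $p^m$), and confirming the inequality $k\le\lfloor m/2\rfloor$ separately for $m$ even and odd. Everything else is a routine translation between the geometry of $L$ and the arithmetic of the quadratic $f$.
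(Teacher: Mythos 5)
Your proposal is correct and follows essentially the same route as the paper: reduce to counting solutions of $ax^2+cx+b\equiv 0\pmod{p^m}$, complete the square via $z=2ax+c$, and bound the number of square roots of $D=c^2-4ab$ modulo $p^m$ by cases on the $p$-adic valuation of $D$, with the extremal count $2p^{\lfloor m/2\rfloor}$ arising exactly as in the paper's cases (ii) and (iii). Your added care in invoking Lemma~\ref{useful1}(ii) to justify invertibility of $a$ before the substitution is a small improvement in rigor over the paper's presentation, but the argument is the same.
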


\begin{proof}
Any point $(x,y) \in L \cap \cH_{p^m}$
gives rise to a solution of the quadratic congruence
$$ax^2+cx+b \equiv 0 \!\!\!\! \pmod{p^m}.$$
Thus we have that 
$$ \# (L \cap \cH_{p^m}) \leq N,$$
where $N$ denotes the number of solutions of 
\begin{equation}\label{eq:quad-cong1}
ax^2+cx+b \equiv 0 \!\!\!\! \pmod{p^m}
\end{equation}
with $0 \leq x < p^m$.

Let $D=c^2-4ab$ be the discriminant of the quadratic equation. Using the modulo $p^m$ invertible linear transformation $z=2ax+c$ we can reduce congruence~\eqref{eq:quad-cong1} to the form
\begin{equation}\label{eq:quad-cong2}
z^2\equiv D \!\!\!\! \pmod{p^m}.
\end{equation}
Hence each solution $x$ of our original congruence corresponds to a solution of this simpler congruence. 

We obtain the bound~\eqref{eq:pt-line-up-bd} by determining the solutions of congruence~\eqref{eq:quad-cong2}. 
There are three possibilities: 
\begin{enumerate}
\item[(i)] $D \not\equiv 0 \pmod{p}$; 
\item[(ii)] $D \equiv 0 \pmod{p^m}$; 
\item[(iii)] $D \equiv 0 \pmod{p}$ but $D \not\equiv 0 \pmod{p^m}.$
\end{enumerate}

(i) Since~\eqref{eq:quad-cong2} has a solution and $D\not\equiv0 \pmod{p}$, the Legendre symbol $(D/p)=1$. 
Therefore the congruence
$z^2\equiv D \pmod{p}$ has precisely 2 solutions. By Hensel's lemma each one lifts to a unique solution of  $z^2\equiv D\pmod{p^m}$. This in turn implies that $f(x) \equiv 0 \pmod{p^m}$ has precisely 2 solutions.

(ii) If $D\equiv 0 \pmod{p^m}$, then the solutions of~\eqref{eq:quad-cong2} are of the form $kp^{\lceil m/2 \rceil}$ with 
$k=0,1,\ldots, p^{\lfloor m/2 \rfloor} -1$, and consequently~\eqref{eq:quad-cong2} has $p^{\lfloor m/2 \rfloor}$ solutions.

(iii) Since $p|D$, but $p^m\not|D$, there exists $i$ with $1 \leq i <m $ such that 
$D\equiv 0 \mod p^i$, but $D\not\equiv 0 \mod p^{i+1}$. Since  $\gcd(D/p^i,p)=1$, we infer that $p^i|Z^2$, but 
$p^{i+1} \not| Z^2$, where $Z$ is a solution of~\eqref{eq:quad-cong2}. It immediately follows that $i$ is even. 

We now rewrite~\eqref{eq:quad-cong2} as 
$$z^2 \equiv D/p^i \!\!\!\! \pmod{p^{m-i}}.$$
Since this congruence has a solution and $\gcd(D/p^i,p)=1$, by the same argument as in (i) above we conclude there are exactly two integers $k_1,k_2$, with $0 < k_1,k_2 < p^{m-i}$, such that
$$k_1^2 \equiv k_2^2 \equiv D/p^i \pmod{p^{m-i}}.$$
Thus the solutions of the congruence~\eqref{eq:quad-cong2} are of the form 
$$(k_1 + lp^{m-i})p^{i/2} \textrm{ or }(k_2 +lp^{m-i})p^{i/2}$$
with $l=0,1,\ldots, p^{i/2}-1$. Consequently,~\eqref{eq:quad-cong2} has $2p^{i/2}$ solutions.

In all three cases we see that~\eqref{eq:quad-cong2} has no more than $2p^{\lfloor m/2 \rfloor}$ solutions and so we obtain 
the bound~\eqref{eq:pt-line-up-bd}.
\end{proof}

Bound~\eqref{eq:pt-line-up-bd} shows that for $m \geq 3$, the points of $C_i$ are not 
collinear. We give a comprehensive proof of this fact to include the case $m=2$. 

\begin{lemma}\label{useful2}
Let $p >2$ and $m \ge 2$. For each $i$, with $i =1,2, \dots,p-1$ the points of $C_i$ are not collinear.
\end{lemma}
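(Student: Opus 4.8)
The plan is to argue by contradiction: assume the $p^{m-1}$ points of $C_i$ all lie on a single line $L$ and derive a contradiction, splitting into two regimes according to the size of $m$. When $m\ge 3$ the claim is immediate from Proposition~\ref{ptline-upperbound}: since $C_i\subseteq\cH_{p^m}$ with $\#C_i=p^{m-1}$, collinearity would force $\#(L\cap\cH_{p^m})\ge p^{m-1}$, whereas the Proposition caps this quantity at $2p^{\lfloor m/2\rfloor}$. Because $m-1-\lfloor m/2\rfloor=\lceil m/2\rceil-1\ge 1$ for $m\ge 3$, we get $p^{m-1}\ge p\cdot p^{\lfloor m/2\rfloor}>2p^{\lfloor m/2\rfloor}$ using $p\ge 3$, the desired contradiction. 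Thus the real content is the borderline case $m=2$, where $\#C_i=p$ matches the Proposition's bound $2p$ up to a constant factor and no counting estimate can decide the question.

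For $m=2$ I would set up explicit coordinates. Writing the elements of $C_i$ as $(i+kp,\,y_k)$ for $k=0,1,\dots,p-1$, with $y_k\equiv(i+kp)^{-1}\pmod{p^2}$ reduced to $[1,p^2-1]$, observe that $y_k\equiv i^{-1}\equiv r\pmod p$ for the fixed residue $r=(i^{-1}\bmod p)\in\{1,\dots,p-1\}$; hence $y_k=r+m_kp$ with $m_k\in\{0,\dots,p-1\}$. The affine map $(x,y)\mapsto\big((x-i)/p,\,(y-r)/p\big)$ preserves collinearity and carries $C_i$ to the points $(k,m_k)$. Expanding $(i+kp)(r+m_kp)\equiv 1\pmod{p^2}$ and writing $ir=1+tp$ yields $t+im_k+rk\equiv 0\pmod p$, so $m_k\equiv -rt-r^2k\pmod p$; that is, $m_k=(\gamma k+\delta)\bmod p$ is a nondegenerate affine function of $k$ modulo $p$ with $\gamma\equiv -r^2\not\equiv 0$ and $\delta\equiv -rt$. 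In particular $k\mapsto m_k$ is a bijection of $\{0,\dots,p-1\}$ onto itself.

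The crux is then purely combinatorial: I would show that $p$ points $(k,m_k)$ with $m_k=(\gamma k+\delta)\bmod p$ and $\gamma\not\equiv 0$ are collinear over $\R$ only in degenerate configurations that cannot occur here. If they were collinear, the consecutive differences $m_{k+1}-m_k$ would all equal the (integer) slope, so $(m_k)$ would be an arithmetic progression of integers; being also a permutation of $\{0,\dots,p-1\}$, its common difference must be $\pm 1$, forcing either $m_k=k$ (so $\gamma\equiv 1,\ \delta\equiv 0$) or $m_k=p-1-k$ (so $\gamma\equiv -1,\ \delta\equiv -1$). It remains to rule out both using $\gamma\equiv -r^2$, $\delta\equiv -rt$, and the constraint $0\le t=(ir-1)/p\le p-2$: the first case needs $r^2\equiv -1$ and $rt\equiv 0$, but $t\equiv 0$ with $t<p$ gives $t=0$, hence $i=r=1$ and $r^2=1\ne -1$ for $p>2$; the second needs $r^2\equiv 1$ (so $r=\pm1$) and $rt\equiv 1$, which a short check against the resulting values of $t$ defeats in each subcase. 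I expect this final step—pinning down $\gamma$ and $\delta$ and verifying that the two arithmetic-progression configurations are inconsistent with the number-theoretic data—to be the main obstacle, since it is precisely here, rather than in any counting bound, that the $m=2$ case is decided.
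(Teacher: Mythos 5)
Your proof is correct, but it takes a genuinely different route from the paper's. The paper gives one argument uniform in $m\ge 2$: assuming $C_i\subseteq L$, it uses the fact that the $x$-coordinates of $C_i$ differ by multiples of $p$ while their $y$-coordinates agree modulo $p$ to force the slope of $L$ to be an integer; reflecting across the symmetry line $y=x$ sends $L$ to a line through $C_{i^{-1}\bmod p}$ with reciprocal integer slope, so the slope is $\pm 1$; slope $-1$ is then excluded by a parity/fixed-point argument (the line is its own reflection, $\#C_i=p^{m-1}$ is odd, so it must pass through $(1,1)$ or $(p^m-1,p^m-1)$, and the slope $-1$ lines there meet $\cH_{p^m}$ only once), and slope $+1$ by exhibiting a point of $C_i$ whose image under $x\mapsto x+b$ leaves the box, or by reducing to the line $y=x$. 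Your argument instead dispatches $m\ge 3$ via Proposition~\ref{ptline-upperbound} --- exactly the observation the paper records in the sentence preceding the lemma --- and handles the real case $m=2$ by a change of coordinates turning $C_i$ into the graph of $k\mapsto(-r^2k-rt)\bmod p$, so that collinearity forces this permutation of $\{0,\dots,p-1\}$ to be an arithmetic progression of common difference $\pm 1$, leaving only the configurations $m_k=k$ and $m_k=p-1-k$; these are killed by the congruences $\gamma\equiv -r^2$, $\delta\equiv -rt$ together with $0\le t\le p-2$. I verified the two subcases you leave as a \emph{short check}: $r=1$ forces $i=1$, $t=0$, so $rt\equiv 0\not\equiv 1$; and $r=p-1$ forces $i=p-1$, $t=p-2$, so $rt\equiv 2\not\equiv 1\pmod p$. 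The paper's proof buys uniformity in $m$ and a slick use of the $y=x$ symmetry; yours isolates where the difficulty actually lives ($m=2$) and gives a completely explicit description of what a collinear $C_i$ would have to be, at the cost of a case split and a somewhat longer computation.
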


\begin{proof}
We argue by contradiction. Suppose there exists a line $L$ and an $i, 1 \leq i \leq p-1,$ such that 
$L\cap \cH_{p^m} = C_i$. By choosing the points on $C_i$ whose $x$-coordinates are $i$ and $i + p$ respectively, we infer that the slope of the line $L$ is an integer. The line $y=x$ is  a line of symmetry of $\cH_{p^m}$. If we reflect $L$ along $y=x$, we get a line $L^\prime$ such that 
$ L^\prime \cap \cH_{p^m} = C_{i^{-1} \!\!\mod p}$.  By the same argument as before we get that the slope of $L^\prime$ is an integer. Furthermore 
$ \textrm{slope}(L) \cdot \textrm{slope}(L^\prime) =1$ and consequently $\textrm{slope}(L) = \pm 1$. 

Suppose $\textrm{slope}(L)=-1$. Since $x=y$ is a line of symmetry of $\cH_{p^m}$, the reflection of $L$ along $x=y$ is $L$ 
itself. Consequently if $(s,t) \in C_i \cap L$, then $(t,s) \in C_i \cap L$. Since the number of points in $C_i$ is odd 
it follows that there must be a point in $C_i \cap L$ lying on $x=y$. Now the only points of $\cH_{p^m}$ that lie on $x=y$
are $(1,1)$ and $(p^m-1,p^m-1)$. Furthermore the line of slope $-1$ passing through $(1,1)$ contains no other points of 
$\cH_{p^m}$. A similar observation holds for $(p^m-1,p^m-1)$. In either case we obtain a contradiction to our assumption 
that $C_i \subseteq L$.

So the last case to consider is $\textrm{slope}(L)=1$. Let $(0,b)$  be the $y$-intercept of $L$. We have two possible cases:
$ i+b \geq p$ or $i +b < p.$

If $i+b \geq p$, then the 
point $(i + p^m-p, i + b + p^m-p)$ does not belong to $\cH_{p^m} \cap L$, and consequently the point of $C_i$ with 
$x$-coordinate $i + p^m-p$ does not lie on $L$. If $i +b <p$, then $i\cdot(i+b) < p^2$. Since $i\cdot(i+b) \equiv 1 \pmod{p^m}$, it follows that $i\cdot(i+b) =1$; that is, 
$(i,i+b)=(1,1)$ and $L$ is the line $x=y$. As noted earlier, this line contains only two points of $\cH_{p^m}$, and so once again we obtain a contradiction to our assumption that $C_i \subseteq L$. 
\end{proof}

We are now in a position to prove Theorem~\ref{Main-result}. We will need the following results.
For small $p^m$ we will invoke the following weaker version of the Dirac-Motzkin conjecture proved by Csima and Sawyer~\cite{CS}.

\begin{theorem}\label{6-13result}
Suppose $P$ is a finite set of $n$ points in the plane, not all on a line and $n \not= 7$. Then $P$ spans at least $6n/13$ ordinary lines.
\end{theorem}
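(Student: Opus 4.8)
The plan is to derive this statement—the theorem of Csima and Sawyer—by passing to the projective dual and then strengthening the Euler-formula bound for line arrangements. First I would dualize: a set $P$ of $n$ points in the plane, not all collinear, corresponds to an arrangement $\mathcal{A}$ of $n$ lines in the real projective plane that are not all concurrent, and under this correspondence a line through exactly two points of $P$ dualizes to a vertex of $\mathcal{A}$ at which exactly two of the lines cross—an \emph{ordinary point} of $\mathcal{A}$. So it suffices to show that an arrangement of $n$ non-concurrent lines (with $n\neq 7$) has at least $6n/13$ ordinary points. Writing $p_k$ for the number of vertices through which exactly $k$ lines pass, I would record the consequences of Euler's formula $V-E+F=1$ on $\mathbb{RP}^2$: counting edges as $E=\sum_k k\,p_k$ and using that every face is bounded by at least three edges, so $3F\le 2E$, yields Melchior's inequality
$$p_2 \ge 3 + \sum_{k\ge 4}(k-3)\,p_k.$$

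This already gives $p_2\ge 3$ (a proof of Sylvester--Gallai), but it is far too weak to reach a bound linear in $n$, so the genuine work is a local strengthening. The route I would take is a discharging argument on $\mathcal{A}$: assign to each vertex and each face an initial charge whose total is, by Euler's formula, a negative constant, and then let triangular faces and high-multiplicity vertices redistribute charge so that the surviving deficit is forced onto ordinary points. The fuel for this is a count of triangular faces, together with the observation that around a vertex of large multiplicity the two extreme edges along each incident line are forced to terminate at ordinary points; this converts an abundance of triangles into a lower bound on $p_2$ that grows with $n$. Bootstrapping from the Kelly--Moser bound $p_2\ge 3n/7$—itself obtained by this style of incidence counting around a single well-chosen vertex—and then sharpening the weights is exactly what upgrades the constant from $3/7$ to $6/13$.

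Finally I would isolate the small and extremal configurations. Because $6/13$ is best possible in general, the discharging inequalities must be essentially tight, so the scheme cannot afford wasted charge and a finite list of near-extremal arrangements must be checked directly. The case $n=7$ is genuinely exceptional and explains the hypothesis: the Kelly--Moser seven-point configuration spans only three ordinary lines, whereas $6\cdot 7/13 > 3$, so the bound fails there and must be excluded. The \textbf{main obstacle} I anticipate is precisely this combination—designing local discharging rules that provably sum to the global bound $6n/13$ with no slack, and then carrying out the delicate hand analysis of the boundary configurations (all small $n$, and the exceptional $n=7$) that a sharp constant inevitably forces.
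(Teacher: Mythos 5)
First, a point of context: the paper does not prove this statement at all --- it is quoted verbatim as the theorem of Csima and Sawyer \cite{CS} and used as a black box in the proof of Theorem~\ref{Main-result}. So there is no ``paper's proof'' to compare against; the relevant comparison is with the argument in \cite{CS} itself.

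Your outline does track the strategy of the actual proof in the literature: dualize so that ordinary lines become ordinary (simple) vertices of a line arrangement in $\mathbb{RP}^2$, derive Melchior's inequality $p_2 \ge 3 + \sum_{k\ge 4}(k-3)p_k$ from Euler's formula, note that this alone is too weak, and then refine the Kelly--Moser local counting to push the constant from $3/7$ to $6/13$. Your explanation of why $n=7$ must be excluded (the Kelly--Moser configuration has exactly $3 < 42/13$ ordinary lines) is also correct. However, as a proof this has a genuine and decisive gap, one you yourself flag as the ``main obstacle'': the entire technical content of the Csima--Sawyer theorem lies in the explicit local counting/discharging scheme --- the precise charges assigned to vertices, edges, and faces, the redistribution rules, the verification that the rules always terminate with at least $6n/13$ units of deficit sitting on ordinary vertices, and the exhaustive analysis of the near-extremal and small configurations. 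None of that is supplied, and it cannot be waved through: the jump from $3n/7$ to $6n/13$ is exactly where Csima and Sawyer's paper spends its roughly fifteen pages of case analysis. As written, your proposal is an accurate roadmap of a known proof rather than a proof; to make it one you would either have to reconstruct the discharging scheme in full detail or, as the authors of this paper do, simply cite \cite{CS}.
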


The Dirac-Motzkin conjecture states that the lower bound for the number of ordinary lines is $n/2$ for sufficiently large $n$. 
Green and Tao~\cite[Theorem~2.2]{GT} in a 2013 preprint on arxiv have confirmed a more precise version of this conjecture which implies the following result. 

\begin{theorem}\label{green_tao}
 Suppose $P$ is a finite set of $n$ points in the plane, not all on a line and $n$ is sufficiently large.
 Then $P$ spans at least $(3/4+o(1))n$ ordinary lines if $n$ is odd and at least $n/2$ ordinary lines if $n$ is even. 
\end{theorem}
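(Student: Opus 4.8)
The plan is to establish the lower bound through a structure theorem for point sets spanning few ordinary lines, in the spirit of the argument of Green and Tao. Write $N(P)$ for the number of ordinary lines spanned by $P$. The scheme has three stages: (1) a \emph{reduction to cubics}, showing that if $N(P)=O(n)$ and $P$ is not contained in a line, then all but $O(1)$ of the points of $P$ lie on a single cubic algebraic curve; (2) a \emph{classification} of the point configurations that can occur on such a cubic; and (3) an exact \emph{count} of ordinary lines for each configuration, from which the separate bounds $(3/4+o(1))n$ for $n$ odd and $n/2$ for $n$ even will emerge, matching the B\"or\"oczky-type extremal configurations that show these bounds are best possible.

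First I would carry out the reduction to cubics. Passing to the projective dual, the points of $P$ become lines and an ordinary line becomes an ordinary point of the dual arrangement, a vertex lying on exactly two lines; by Melchior's inequality and its refinements an arrangement with few ordinary points is highly non-generic, and one extracts from this that $P$ contains many $3$-rich lines (lines through at least three points of $P$). The heart of this stage is an interpolation argument: the abundance of collinear triples forces the existence of a cubic form vanishing on $P$. Concretely, after discarding a bounded set of exceptional points, every surviving point lies on enough $3$-rich lines that a dimension count in the $9$-dimensional space of plane cubics, together with a Cayley--Bacharach type argument, produces a single cubic $\gamma$ passing through all of them.

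Next I would analyze the points lying on $\gamma$. If $\gamma$ is irreducible (smooth, or nodal, or cuspidal) I would use the group law: fixing a flex as the identity, three points are collinear precisely when they sum to the identity in the group $\Gamma$ of smooth points, a one-dimensional real Lie group whose torsion subgroup is cyclic or cyclic times $\Z/2\Z$. The hypothesis $N(P)=O(n)$ then says that for almost all pairs $P_1,P_2\in P$ the third intersection point $-(P_1+P_2)$ again lies in $P$, and inverse-sumset and stability results in abelian groups force $P$ to be, up to $O(1)$ points, a coset of a finite subgroup of $\Gamma$ of order essentially $n$. A reducible $\gamma$ (a conic plus a line, or three lines) I would handle by treating the components separately, which recovers the circle-plus-line configurations. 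In every case the geometric problem collapses to a purely combinatorial count inside a finite abelian group.

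Finally I would perform that count. Ordinary lines correspond to tangent lines and to secants whose third intersection point falls outside $P$, so the number of them is governed by the number of solutions of equations such as $2P_1=P_2$ in a group of order essentially $n$; this is exactly where the parity enters, since the $2$-torsion of the group is trivial when the order is odd and has order $2$ when it is even, producing the two different constants $3/4$ and $1/2$. Optimizing over all admissible configurations then gives the stated minima. The hard part will be Stage (1): converting the soft combinatorial hypothesis ``few ordinary lines'' into the rigid algebraic conclusion ``the points lie on a cubic'' is the deep core of the Green--Tao argument, requiring both the dual/Melchior analysis and a delicate interpolation step, and it is precisely here that the hypothesis that $n$ be sufficiently large cannot be removed.
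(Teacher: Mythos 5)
You have set out to prove something the paper itself never proves: Theorem~\ref{green_tao} is Green and Tao's theorem, quoted as a known result (Theorem~2.2 of \cite{GT}) and used as a black box in the proof of Theorem~\ref{Main-result}. There is no internal proof to compare against; the paper's entire ``proof'' is the citation, exactly as with the Csima--Sawyer bound of Theorem~\ref{6-13result}.

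That said, your sketch is a faithful description of the architecture of Green and Tao's actual argument: the dual/Melchior analysis, the structure theorem placing all but $O(1)$ points on a cubic, the group law on the smooth points with the coset/stability analysis, and the role of $2$-torsion in producing the distinct constants $3/4$ and $1/2$. But it is a program, not a proof. Stage~(1), which you yourself flag as the hard part, is the deep core of Green and Tao's seventy-two page paper; converting ``few ordinary lines'' into ``almost all points lie on a cubic'' requires far more than Melchior's inequality plus a dimension count in the space of cubics (it needs their triangular-structure analysis of near-extremal line arrangements and a lengthy case study), and none of that is carried out here. Stage~(3)'s ``optimizing over all admissible configurations,'' which is where the precise constants actually come from, is likewise asserted rather than performed. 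Since every essential step is deferred to results you do not establish, the proposal cannot stand as a proof; in the context of this paper the correct move is the one the authors make, namely to invoke \cite{GT} and spend the effort instead on Lemmas~\ref{useful1} and~\ref{useful2}, which is where the paper's genuine new content lies.
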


\begin{proof}[Proof of Theorem~\ref{Main-result}]
We first consider the case $p > 2.$ If $(x_1,y_1) \in C_i$ and $(x_2,y_2) \in C_j$ with $i \not= j$, then Lemma~\ref{useful1} (iii) shows that the line through the points $(x_1,y_1)$ and 
$(x_2,y_2)$ is ordinary. There are $(p-2)(p-1)p^{2(m-1)}/2$ possible such pairs of points. Furthermore, since the points of $C_i$ do not all lie on a line, by Theorem~\ref{6-13result}
or Theorem~\ref{green_tao}, respectively, each $C_i$ gives rise to 
at least $c\left(p^m\right)p^{m-1}$ ordinary lines. From these observations we conclude that 
\begin{equation*}
N  \geq  p^{m-1}(p-1)\left(\frac{p^{m-1}(p-2)}{2} + c(p^m)\right),
\end{equation*}
where $c(n)$ is defined in Theorem~\ref{Main-result}. For $p=2$, we have $C_1=\cH_{2^m}$ and consequently, 
$N \geq c\left(2^m\right) \cdot \#C_1 = c\left(2^m\right)2^{m-1}$. 
This is the same as the RHS of~\eqref{eq:low-bd-ord-line} as the first term of the RHS of~\eqref{eq:low-bd-ord-line} is 0.
\end{proof}

We now give an application of Beck's theorem~\cite[Theorem 3.1]{JB} to obtain an estimate for the number of lines spanned by $C_i$ when $m \geq 3$. We first state Beck's theorem in its original version.

\begin{theorem}[Beck]
Let $P$ be a set of $n$ points in the plane. Then at least one of the following holds:
\begin{enumerate}
\item[(i)] There exists a line containing at least $n/100$ points of $P$.
\item[(ii)] For some positive constant $c$, there exist at least $c\cdot n^2$ distinct lines containing two or more points of $P$.
\end{enumerate}
\end{theorem}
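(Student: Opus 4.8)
The plan is to deduce Beck's theorem from the Szemer\'edi--Trotter incidence theorem, which asserts that the number of incidences between a set of $n$ points and a set of $\ell$ lines in the plane is $O\!\left(n^{2/3}\ell^{2/3}+n+\ell\right)$. First I would use this to control the number of \emph{rich} lines. For each integer $k\ge 2$, let $m_{\ge k}$ denote the number of lines meeting $P$ in at least $k$ points. The incidences between $P$ and these lines number at least $k\,m_{\ge k}$ and at most $O\!\left(n^{2/3}m_{\ge k}^{2/3}+n+m_{\ge k}\right)$; solving the resulting inequality yields the standard bound $m_{\ge k}=O\!\left(n^2/k^3+n/k\right)$.

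Next I would assume that alternative (i) fails, so that every line contains fewer than $n/100$ points, and estimate the number of pairs of points of $P$ that lie on lines carrying many points. Grouping lines dyadically according to the number of points they contain, a line with between $2^{k}$ and $2^{k+1}$ points contributes at most $\binom{2^{k+1}}{2}=O(4^k)$ collinear pairs, and there are at most $m_{\ge 2^k}=O(n^2/2^{3k}+n/2^k)$ such lines; hence these lines account for at most $O(n^2/2^{k}+n\,2^{k})$ pairs. I would then fix a large absolute constant $t$ and sum this estimate over the dyadic ranges with $t\le 2^k$ and $2^{k+1}\le n/100$. The $n^2/2^{k}$ contributions form a geometric series dominated by its smallest term $O(n^2/t)$, while the $n\,2^{k}$ contributions form a geometric series dominated by its largest term $O\!\left(n\cdot(n/100)\right)$; choosing $t$ large and tracking the absolute constants, the total number of pairs lying on lines with at least $t$ points can be made at most $\tfrac12\binom{n}{2}$ once $n$ is large (the finitely many small $n$ being absorbed into the final constant).

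It then follows that at least $\tfrac12\binom{n}{2}$ pairs of points lie on lines containing fewer than $t$ points. Since each such line contains at most $\binom{t-1}{2}$ pairs, the number of distinct lines of this type is at least $\binom{n}{2}/\bigl(2\binom{t-1}{2}\bigr)$, which is $c\cdot n^2$ for a positive absolute constant $c$; as each of these lines contains two or more points of $P$, this is exactly alternative (ii).

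The main obstacle is the bookkeeping in the dyadic summation: the threshold $t$ and the constant hidden in the Szemer\'edi--Trotter bound must be balanced against the cut-off $n/100$ so that the pairs on rich lines constitute strictly less than half of all $\binom{n}{2}$ pairs, leaving a positive proportion of pairs on poor lines. Beck's original argument avoids Szemer\'edi--Trotter by proving a weaker incidence estimate directly, but the route above is cleaner and self-contained given the incidence theorem as a black box.
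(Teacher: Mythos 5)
Your argument is correct in outline, but note that the paper does not prove this statement at all: it is quoted verbatim as Beck's theorem and imported from the reference \cite{JB}, where Beck establishes it by a direct (and pre-Szemer\'edi--Trotter) incidence argument. What you have written is the now-standard derivation from the Szemer\'edi--Trotter bound: the rich-line estimate $m_{\ge k}=O(n^2/k^3+n/k)$, the dyadic count of collinear pairs on lines with between $t$ and $n/100$ points, and the pigeonhole step converting a positive proportion of ``poor'' pairs into $c\cdot n^2$ spanned lines. This route is sound, and the two caveats you flag are exactly the right ones. First, the explicit constant $100$ in alternative (i) is not free: it must be checked against the implied constants in Szemer\'edi--Trotter (with the known explicit constants the computation does close, since the $n\cdot 2^k$ contributions sum to a small multiple of $n^2/100$ and the $n^2/2^k$ contributions are killed by taking $t$ a large absolute constant); Beck's original formulation leaves this constant unspecified, and for the paper's application (the Corollary requiring $p^{\lceil m/2\rceil-1}>200$ so that case (i) is excluded by the bound $\#(L\cap\cH_{p^m})\le 2p^{\lfloor m/2\rfloor}$) any absolute constant would serve equally well, with the threshold adjusted accordingly. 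Second, the finitely many small $n$ are genuinely harmless, since for $n\le 200$ any spanned line already contains at least $n/100$ points, so alternative (i) holds automatically there. One small wording slip: the geometric series $\sum n^2/2^k$ over $2^k\ge t$ is dominated by the term with the \emph{smallest exponent} $k$ (the largest term of the series), not by its ``smallest term''; the stated value $O(n^2/t)$ is nonetheless correct. In short, your proof is a legitimate, self-contained replacement for the citation, at the cost of assuming Szemer\'edi--Trotter as a black box, whereas the paper simply defers to Beck.
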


\begin{cor}
If 
$$ p^{\lceil m/2 \rceil-1} > 200,$$
then the number of lines spanned by $C_i$ with $i=1,\dots, p-1,$ is at least $ c \cdot p^{2(m-1)}$,
where $c$ is the constant in Beck's theorem.
\end{cor}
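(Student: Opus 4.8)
The plan is to apply Beck's theorem directly to the point set $P = C_i$, which has $n = \#C_i = p^{m-1}$ points. Beck's theorem presents a dichotomy, so the entire argument reduces to ruling out its first alternative, namely that some line contains at least $n/100$ points of $C_i$. Once alternative (i) is excluded, alternative (ii) must hold, and it delivers at least $c \cdot n^2 = c \cdot p^{2(m-1)}$ distinct lines through two or more points of $C_i$, which is exactly the claimed bound.

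To exclude alternative (i), I would invoke Proposition~\ref{ptline-upperbound}. Since $C_i \subseteq \cH_{p^m}$, every line meets $C_i$ in no more points than it meets $\cH_{p^m}$, so any line spanned by $C_i$ contains at most $2p^{\lfloor m/2 \rfloor}$ of its points. It therefore suffices to show that the hypothesis forces
$$2 p^{\lfloor m/2 \rfloor} < \frac{p^{m-1}}{100} = \frac{n}{100},$$
which guarantees that no line can carry the $n/100$ points demanded by alternative (i).

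The verification of this inequality is the only piece of genuine bookkeeping. Rearranging, the inequality $2p^{\lfloor m/2\rfloor} < p^{m-1}/100$ is equivalent to $200 < p^{\,m-1-\lfloor m/2\rfloor}$, and using the identity $m - \lfloor m/2 \rfloor = \lceil m/2 \rceil$ the exponent simplifies to $m - 1 - \lfloor m/2 \rfloor = \lceil m/2 \rceil - 1$. Hence the inequality is precisely $p^{\lceil m/2 \rceil - 1} > 200$, which is exactly the hypothesis of the corollary. This closes off alternative (i) and completes the argument.

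I do not anticipate a serious obstacle here: the content is carried almost entirely by Proposition~\ref{ptline-upperbound}, and the role of the corollary's hypothesis is simply to make the point--line bound small relative to $n/100$ (it simultaneously forces $n = p^{m-1}$ to be large, so that Beck's universal constant $c$ applies). The one point to keep in mind is that the conclusion inherits the standing assumption $p>2$ of Proposition~\ref{ptline-upperbound}; one should also note that excluding alternative (i) automatically rules out all of $C_i$ lying on a single line, in agreement with Lemma~\ref{useful2}.
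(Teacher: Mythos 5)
Your proposal is correct and matches the paper's proof: the paper likewise applies Beck's theorem to $P=C_i$ and rules out the first alternative via the bound $\#(L\cap\cH_{p^m})\le 2p^{\lfloor m/2\rfloor}$ of Proposition~\ref{ptline-upperbound}. You have merely spelled out the exponent arithmetic $m-1-\lfloor m/2\rfloor=\lceil m/2\rceil-1$ that the paper leaves implicit, and your remark about inheriting the assumption $p>2$ is a fair observation.
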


\begin{proof}
We apply Beck's theorem with $P=C_i$. By~\eqref{eq:pt-line-up-bd} the first case of Beck's theorem does not hold. Hence $C_i$ spans at least $c \cdot p^{2(m-1)}$ lines.
\end{proof}

\section{Shparlinski's Question}

One natural family of questions about finite point sets involves the various sets of distances they can determine. See for example \cite{BMP} or \cite{GIS}. In his survey paper~\cite{IS} on the properties of 
${\mathcal H}_{a,n}$, Shparlinski 
raises such a question.

Let  $\cF_{a,n}$ denote the set of Euclidean distances from the origin to points on 
$\cH_{a,n}$, that is, 
$$\cF_{a,n}= \{ \sqrt{x^2 + y^2} \, : \, (x,y) \in \cH_{a,n} \}.$$
In~\cite{IS} Shparlinski presents a proof by the fourth author(AW) that 
$$\# \cF_{a,p} = \frac{p+ (a/p)}{2}, \quad p >2, $$
where $p$ is  prime, $\gcd(a,p) =1$, and $(\cdot/p)$ is the Legendre symbol. It is natural to ask whether there is a similar formula 
for the cardinality $\# \cF_{a,n}$ for general $n$. The points of $\cH_{a,n}$ are symmetric along the line $y=x$ which suggests that 
$\# \cF_{a,n}$ is approximately $\varphi(n)/2$. The primary goal of this section is to adapt the proof in~\cite{IS} to estimate the difference 
$$\# \cF_{a,n} - \frac{\varphi(n)}{2}$$ when $n=p^2$ with $p$ an odd prime. 

To simplify the notation we introduce a map  $d_{a,n}: \Za_n \rightarrow \Z$ via 
$$d_{a,n}(x) = (x \bmod n)^2 + \left(\left(a \cdot x^{-1} \right) \bmod n \right)^2.$$
Clearly $\# {\rm Image}(d_{a,n}) = \# \cF_{a,n}$. 

We now focus on estimating $\# \Im$.

We should remark that determining the cardinality of the set 
$$\{ (x^2 + y^2) \!\! \mod n \, : \, (x,y) \in \cH_n \}$$
is easier and has been done in~\cite{HK} using completely elementary methods, that is, algebraic manipulations in conjunction with the Chinese Remainder Theorem.

\subsection{Some notation}

We begin by defining a class of biquadratic polynomials and certain subsets of $\Im$ and $\Za_{p^2}$. Let $f_u(Z)$ denote the polynomial 
$$f_u(Z)= Z^4-uZ^2+a^2.$$
Let $A \subseteq \Im$ be the set 
$$A= \{ u \in \Im : f_u(Z), f_u^{\prime}(Z) \textrm{ have no common root modulo } p\}$$
and let $B$ be the complement of $A$ in $\Im$. 

Let $B_1, B_2$ be the following two subsets of $\Im$. 
\begin{equation*}
B_1= \{ \dap(l) : l \in \Z_{p^2}^\ast, \,\,\, l^2 -a \equiv 0 \!\!\!\! \pmod{p} \},
\end{equation*}
and 
\begin{equation*}
B_2= \{ \dap(l) : l \in \Z_{p^2}^\ast, \,\,\, l^2 + a \equiv 0 \!\!\!\! \pmod{p} \}.
\end{equation*}

Finally if $a$ is a quadratic residue modulo $p$, then there is an integer $b, 0 < b < p$ such that $b^2 \equiv a \pmod{p}$. 
In this case we define the sets $C_1,C_2 \subseteq \Za_{p^2}$ via 
\begin{equation*}
C_1= \{ b+tp \, : \, 0\le t \le p-1 \},
\end{equation*}
\begin{equation*}
C_2= \{ p-b+tp \, : \, 0 \le t \le p-1 \}.
\end{equation*}

\subsection{Main result of Section 2 and proof} 

\begin{theorem}\label{sec-main-result}
Let $p>2$ be a prime. Then 
\begin{equation*}
\# \Im \, = \, \frac{\varphi(p^2)+1 +(a/p)}{2}-\#(d_{a,p^2}(C_1)\cap d_{a,p^2}(C_2)).
\end{equation*}
\end{theorem}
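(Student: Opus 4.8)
The plan is to exploit the distance-preserving involution on $\cH_{p^2}$ given by reflection in the line $y=x$, and to measure how far $\dap$ is from being exactly two-to-one. Define $\iota\colon\Za_{p^2}\to\Za_{p^2}$ by $\iota(x)=ax^{-1}\bmod p^2$; since $\iota$ merely swaps the coordinates of $(x,ax^{-1})\in\cH_{a,p^2}$, we have $\dap(\iota(x))=\dap(x)$, so $\dap$ is constant on each $\iota$-orbit. The fixed points of $\iota$ are the solutions of $x^2\equiv a\pmod{p^2}$, of which there are exactly $1+(a/p)$, so the number of $\iota$-orbits is $\tfrac12(\varphi(p^2)+1+(a/p))$, precisely the main term. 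Writing $x\sim x'$ when $\dap(x)=\dap(x')$, every $\sim$-class is a union of $\iota$-orbits, whence
\[ \#\Im \;=\; \frac{\varphi(p^2)+1+(a/p)}{2}\;-\;M, \]
where $M=\#\{\iota\text{-orbits}\}-\#\{\sim\text{-classes}\}$ counts the ``merges''. The whole task is to prove $M=\#(\dap(C_1)\cap\dap(C_2))$.

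Next I would attach to each value $u$ the polynomial $f_u(Z)=Z^4-uZ^2+a^2$ and note that if $\dap(x)=u$ with $y=ax^{-1}\bmod p^2$, then $x^2+y^2\equiv u$ and $x^2y^2\equiv a^2\pmod{p^2}$, so $x$ is a root of $f_u$ modulo $p^2$. When $u\in A$ the polynomial $f_u$ is separable mod $p$, so by Hensel's lemma it has at most $\deg f_u=4$ roots modulo $p^2$; these form the single set $\{x,-x,y,-y\}$, of which only $x,y$ realize the distance $u$ (the complementary residues $p^2-x,\,p^2-y$ lie at distance $(p^2-x)^2+(p^2-y)^2\ne u$, since equality would force $x+y=p^2$, hence $x^2\equiv-a\pmod p$ and $u\in B_2$, contrary to $u\in A$). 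Thus a value in $A$ comes from exactly one $\iota$-orbit and contributes nothing to $M$: every merge forces $u\in B$. Computing the discriminant condition, $f_u$ and $f_u'$ share a root mod $p$ exactly when $u\equiv\pm2a\pmod p$, and the identities $\dap(x)\equiv(x-ax^{-1})^2+2a$ and $\dap(x)\equiv(x+ax^{-1})^2-2a\pmod{p^2}$ show that $B_1$ collects the values $\equiv2a$ coming from $x^2\equiv a\pmod p$ (i.e.\ from $x\in C_1\cup C_2$) while $B_2$ collects those $\equiv-2a$. In particular $B=B_1\sqcup B_2$, so no merge can occur between $B_1$ and $B_2$.

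It then remains to locate the merges inside $B_1$. The key reduction is the claim that $\dap$ has \emph{no} internal merges within $C_1$, within $C_2$, or within the $B_2$-preimage $\{x^2\equiv-a\bmod p\}$. Granting this, $\dap$ is exactly two-to-one on each of these sets away from fixed points, giving $\#\dap(C_1)=\#\dap(C_2)=(p+1)/2$ (one fixed point lies in each $C_i$ when $a$ is a residue) and $\#B_2=\tfrac12\#\{x:x^2\equiv-a\bmod p\}$. Since the only surviving coincidences are the cross-merges pairing one $C_1$-orbit with one $C_2$-orbit, each element of $\dap(C_1)\cap\dap(C_2)$ accounts for exactly one merge, so $M=\#(\dap(C_1)\cap\dap(C_2))$. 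Feeding $\#\Im=\#A+\#B_1+\#B_2$ with $\#B_1=\#\dap(C_1)+\#\dap(C_2)-\#(\dap(C_1)\cap\dap(C_2))$ through the arithmetic---splitting on whether $\pm a$ are quadratic residues, which controls the sizes of the $A$- and $B_2$-preimages---collapses to the asserted formula; when $a$ is a nonresidue the sets $C_1,C_2$ are empty and the correction term is $0$.

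The main obstacle is the no-internal-merge claim, and it is genuinely a statement about \emph{exact integer} values rather than congruences. For $u\in B_1$ the congruence $f_u(Z)\equiv0\pmod{p^2}$ typically has an entire residue class (all $p$ elements of $C_1$) as solutions, so root counting alone cannot separate the orbits; one must use that $x$ and $ax^{-1}$ denote least residues in $[1,p^2-1]$. I would parametrize $C_1$ through its fixed point $\beta$ (with $\beta^2\equiv a\pmod{p^2}$) by writing $x\equiv\beta+sp\pmod{p^2}$, which yields $ax^{-1}\equiv\beta-sp\pmod{p^2}$ so that $\iota$ acts by $s\mapsto-s$, and then compute the exact least residues of $\beta\pm sp$ and the resulting integer $\dap(\beta+sp)$ as $s$ runs over $0,1,\dots,(p-1)/2$. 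Showing these $(p+1)/2$ integers are pairwise distinct---and doing the analogous computation on $C_2$ and on the $B_2$-preimage, where no fixed point is available---is the technical heart of the argument and requires careful bookkeeping of the reductions modulo $p^2$.
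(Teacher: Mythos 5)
Your global framework is sound, and despite the involution/merge-counting language it is essentially the paper's own argument repackaged: your single-orbit analysis of the fibers over $A$ (including the argument ruling out $\dap(p^2-x)=\dap(x)$ via $x+y=p^2$) is the paper's step (b); your discriminant analysis of $f_u$ and the decomposition $B=B_1\sqcup B_2$ is its step (d); and your identification of the correction term $M$ with cross-merges between $C_1$ and $C_2$ is the content of its steps (e)--(f). Everything you actually prove is correct, and the orbit-counting main term $\tfrac12\left(\varphi(p^2)+1+(a/p)\right)$ is a clean way to organize the final arithmetic.

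But there is a genuine gap, and you name it yourself: the ``no-internal-merge'' claims --- that $\dap$ identifies only $\iota$-orbit mates within $C_1$, within $C_2$, and within the $B_2$-preimage --- are set up but never proved. Without them you get neither $\#\dap(C_i)=(p+1)/2$, nor $\#B_2=p$, nor the key equality $M=\#(\dap(C_1)\cap \dap(C_2))$: an internal merge inside a single $C_i$ would increase $M$ without contributing to the intersection, and the asserted formula would fail. This is exactly where the paper does its real work. It parametrizes $C_1$ as $b+tp$, $0\le t\le p-1$, with $1\le b\le p-1$ (these are automatically least residues, so no reduction bookkeeping is needed, unlike your $\beta\pm sp$ parametrization, which can overflow $[1,p^2-1]$ and forces case analysis before any distances can even be written down); it introduces $j_p$ defined by $b(b+j_pp)\equiv a \pmod{p^2}$, expresses the exact distance as one of two explicit quadratics $f(t),g(t)$, and verifies by direct computation that $f(t)=f(s)$ iff $s=t$ or $s=j_p-t$, that $g(t)=g(s)$ iff $s=t$ or $s=p+j_p-t$, and that $f(t)=g(s)$ is impossible (it forces $2\mid p$); the analogous computation on $D_1,D_2$ settles the $B_2$ case, where $\dap$ is genuinely injective on $D_1$ rather than two-to-one. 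These verifications are the technical heart of the theorem --- as you acknowledge --- and your proposal stops exactly at their doorstep, so as written it is an outline rather than a proof.
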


\noindent {\it Outline of proof of Theorem~\ref{sec-main-result}}. The proof is encapsulated in the following sequence of statements.
\begin{enumerate}
\item[(a)] We can associate each $u \in \Im$ with the congruence 
$$f_u(Z) \equiv 0 \!\!\!\! \pmod{p^2}.$$
\item[(b)] Using properties of $f_u(Z)$ we show that for each $u\in A$, there are exactly two distinct elements $x_1,x_2 \in \Za_{p^2}$ such that $$\dap(x_1) =\dap(x_2) =u.$$
\item[(c)] The cardinality of $A$ is 
$$\#A = \frac{\varphi(p^2)- \# \dap^{-1}(B)}{2}.$$
\item[(d)] The set $B$ is the disjoint union of the sets $B_1$ and $B_2$. Consequently,
$$\# \dap^{-1}(B)=\# \dap^{-1}(B_1)+\# \dap^{-1}(B_2).$$
\item[(e)] If $B_2\not= \emptyset$, then $\#\dap^{-1}(\{B_2\}) = 2p$ and $\#B_2 =p$.
\item[(f)]If $B_1 \not= \emptyset$, then $\#\dap^{-1}(\{B_1\})=2p$. Furthermore, $$ B_1 = \dap(C_1) \cup \dap(C_2)$$ with 
\begin{equation*}
\#\dap(C_i) = \frac{p-1}{2}+1,
\end{equation*} 
for $i=1,2$.
\end{enumerate}

\begin{proof}[Proof of (a),(b) and (c)]

Let $u \in \Im$. Then $u = r_u^2+ \left(ar_u^{-1}\right)^2$ for some $r_u \in \Z_{p^2}^\ast$ with $1 \leq r_u, ar_u^{-1} < p^2$.
It immediately follows that $r_u$ is a root of the congruence $f_u(Z) \equiv 0 \pmod{p}$.

We now turn to statements (b) and (c). Let $u \in A$ and let $r_u \in \Z_{p^2}^\ast$ such that $\dap(r_u)=u$. We claim that 
$$\dap^{-1}(\{u\}) = \{r_u, \, ar_u^{-1}\}.$$
We first show $r_u \not= ar_u^{-1}$, by proving the contrapositive. Let $x = r_u \mod p$ and 
$y = ar_u^{-1} \mod p$. If $r_u = ar_u^{-1}$, then $x=y$, $x^2 \equiv a \pmod{p}$ and $u \equiv 2 x^2 \pmod{p}$. It follows that $f_u(Z)$ factors as 
$$ f_u(Z) = Z^4-uZ^2 +a^2 \equiv (Z-x)^2(Z+x)^2 \!\!\!\! \pmod{p}.$$
But this contradicts our assumption that $f_u(Z)$ and $f^{\prime}_u(Z)$ do not have any roots in common modulo $p$. In a similar fashion we show that 
$ar_u^{-1} \not= p^2 -r_u.$

We now observe that $f_u(Z)$ has four distinct roots modulo $p$: $x, y, p-x$ and $p-y$. Furthermore each root lifts to a \emph{unique} root modulo $p^2$, that is, $x$ lifts to $r_u$, $y$ to $ar_u^{-1}$, $p-x$ to $(p^2-r_u)$ and $p-y$ to $(p^2-ar_u^{-1})$. Consequently 
$\dap^{-1}(\{u\}) \subseteq \{r_u, \, ar_u^{-1},\, p^2-r_u, \, p^2-ar_u^{-1} \}.$ So to conclude the proof we need to prove 
that $\dap(r_u) \not= \dap(p^2 -r_u)$. If $\dap(r_u) = \dap(p^2 -r_u)$, then a simple calculation shows $a r_u^{-1} = (p^2-r_u)$ which contradicts our earlier calculation that $ar_u^{-1} \not= p^2 -r_u.$
\end{proof}

\begin{proof}[Proof of (d)]
Let $\dap(r_u) =u$, where $u \in (\Im \cap B)$ and let $x = r_u \mod p$. Since $u \in B$, $x$ is a common root modulo $p$ of the polynomials 
$f_u(Z)= Z^4-uZ^2 +a^2$ and $f^{\prime}_u(Z)= 4Z^3-2uZ$. It follows that $2x^2 = u \pmod{p}$ and 
$$ (a-x^2)(a+x^2) \equiv 0 \!\!\!\! \pmod{p}.$$
Therefore $$x^2 \equiv a \!\!\!\! \pmod{p} \textrm{ and }u \equiv 2a \!\!\!\! \pmod{p}$$
or 
$$x^2 \equiv -a \!\!\!\! \pmod{p} \textrm{ and }u \equiv -2a \!\!\!\! \pmod{p}.$$
In the first case $u \in B_1$, and in the second $u \in B_2$. Finally  $B_1 \cap B_2 = \emptyset$ since $2a \not\equiv -2a \pmod{p}$. 
\end{proof}

\begin{proof}[Proof of (e)]
If $B_2 \not= \emptyset$, then there exists an integer $c$ with $1 \le  c \le p-1,$ such that $c^2  \equiv -a  \pmod{p}$. It follows that 
$d_{a,p^2}^{-1}(B_2)$ is the disjoint union of the sets $D_1,D_2$ where 
\begin{equation*}
D_1= \{ c+tp \, : \,  0 \le t \le p-1 \},
\end{equation*}
\begin{equation*}
D_2= \{ p-c+tp \; : \; 0 \le t \le p-1 \}.
\end{equation*}
Consequently, $\# \dap^{-1}(\{B_2\}) =2p$.

Now there exists a unique integer $l_p$, $ 0 \le l_p \le p-1$, such that 
$$c \cdot (p-c +l_pp) \equiv a \!\!\!\! \pmod{p^2}.$$
It follows that for $t =0,1,\dots,p-1$, 
\begin{equation*}
\left(a \cdot(c + tp)^{-1}\right) \!\!\!\! \mod p^2 = \left\{ \begin{array}{ll} p- c + (\lp + t)p, & \lp + t < p  \\ p- c + (\lp + t - p)p, & \lp + t \ge p. \end{array} \right. 
\end{equation*}
From this we see that $x \in D_1$ if and only if  $a \cdot x^{-1} \in D_2$, and we can conclude that the sets $d_{a,p^2}(D_1)$ and $d_{a,p^2}(D_2)$ 
are equal, and consequently $B_2 = d_{a,p^2}(D_1)$. So we are done if we can show that $d_{a,p^2}$ is one-to-one on $D_1$. To do this we 
define the functions 
$$f(t) = (c+tp)^2 + (p-c+(l_p+t)p)^2$$ 
and
$$g(t) = (c+tp)^2 + (p-c+(l_p+t-p)p)^2.$$
That is,
\begin{equation*}
d_{a,p^2}(c+tp) = \left\{ \begin{array}{ll} f(t), & \lp + t < p,  \\ g(t), & \lp + t \ge p. \end{array} \right. 
\end{equation*}
A simple calculation shows that $f(t) = f(s)$ if and only if $s=t$. Similarly, $g(t)=g(s)$ if and only if $s=t$. Finally, if we try to solve the 
equation $f(t)=g(s)$, we get the contradiction that $2|p$. Thus we get that $d_{a,p^2}$ is one-to-one on $D_1$.
\end{proof}

\begin{proof}[Proof of (f)]
If $B_1 \not= \emptyset$, then there exists an integer $b$ with $1 \le  b \le p-1,$ such that $b^2  \equiv a  \pmod{p}$.  
It follows that $d_{a,p^2}^{-1}(B_1)$ is the disjoint union of the sets $C_1,C_2$, where (we remind the reader)
$$ C_1= \{ b+tp \, : \, 0\le t \le p-1 \}, \textrm{ and } 
C_2= \{ p-b+tp \, : \, 0 \le t \le p-1 \}.$$
Consequently, $\# \dap^{-1}(\{B_1\}) =2p$.

The remaining part of the proof is trickier than the case for $B_2$. This is because $d_{a,p^2}$ is not one-to-one on $C_1$, nor are $d_{a,p^2}(C_1)$ and $d_{a,p^2}(C_2)$ equal as sets. We will prove that 
$$\#d_{a,p^2}(C_1)= \#d_{a,p^2}(C_2) = \frac{p-1}{2} +1.$$

Now there exists a unique integer $j_p$, $ 0 \le j_p \le p-1$, such that 
$$b \cdot (b +j_pp) \equiv a \!\!\!\! \pmod{p^2}.$$
It follows that for $t =0,1,\dots,p-1$, 
\begin{equation*}
\left(a \cdot(b + tp)^{-1}\right) \!\!\!\! \mod p^2 = \left\{ \begin{array}{ll} b + (j_p - t)p, &  t \le j_p  \\ b + (p+j_p - t)p, &  t > j_p. \end{array} \right. 
\end{equation*}
We now define the functions 
$$f(t) = (b+tp)^2 + (b+(j_p-t)p)^2 \textrm{ and } g(t) = (b+tp)^2 + (b+(p+j_p-t)p)^2.$$
That is,
\begin{equation}\label{eq:inv1}
d_{a,p^2}(b+tp) = \left\{ \begin{array}{ll} f(t), &  t \le j_p  \\ g(t), & t > j_p. \end{array} \right. 
\end{equation}
A simple calculation shows that $f(t) = f(s)$ if and only if $s=t$ or $s = j_p -t.$ Similarly, $g(t)=g(s)$ if and only if $s=t$ or $s=p +j_p -t$. 
Finally if we try to solve the equation $f(t)=g(s)$ we get the contradiction that $2|p$. These observations combined with the observation that either 
$(b + j_pp/2)$ or $(b+(j_p+p)p/2)$ is a solution of $x^2 \equiv a \pmod{p^2}$, give us the following:
\begin{enumerate}
\item[(i)] If $j_p$ is even, then $\#f^{-1}(\{t\})= 2$ for $ t \leq j_p, t \not= j_p/2$; $\#g^{-1}(\{t\})= 2$ for $ t > j_p$; and 
$\#f^{-1}(\{j_p/2\})= 1$.
\item[(ii)] If $j_p$ is odd, then $\#f^{-1}(\{t\})= 2$ for $ t \leq j_p$; $\#g^{-1}(\{t\})= 2$ for $ t > j_p, t \not= (j_p+p)/2$; and 
$\#f^{-1}(\{(j_p+p)/2\})= 1$.
\end{enumerate}
We conclude that 
$$\#d_{a,p^2}(C_1) = \frac{p-1}{2}+1.$$
In a similar manner we show that $\#d_{a,p^2}(C_2) = (p-1)/2+1.$  

In summary we see that if $(a/p)=1$, then 
$$\#B_1 = p+1 - \#\left(d_{a,p^2}(C_1)\cap d_{a,p^2}(C_2)\right).$$

\end{proof}

\subsection{Bounding $\#\left(d_{a,p^2}(C_1)\cap d_{a,p^2}(C_2)\right)$}

Thus the key difficulty to determining the cardinality $\# \Im$ is determining the cardinality of the intersection 
$d_{a,p^2}(C_1)\cap d_{a,p^2}(C_2).$ We now identify $C_1\times C_2$ with the set $ \{0,1,\dots,p-1\}^2$
via
$$(t,s) \mapsto (b + tp, p-b+sp)$$
and then define the map 
$$ l: \{0,1,\dots,p-1\}^2 \rightarrow \Z^2$$
via
$$l\left((t,s)\right) = (d_{a,p^2}(b+tp),d_{a,p^2}(p-b+sp)).$$ 
Clearly,  
$$\# \left(d_{a,p^2}(C_1)\cap d_{a,p^2}(C_2) \right) = \# \left( l\left([0,p-1]^2\right) \cap \{(x,x) \, : \, x \in \Z \} \right). $$ 

In~\eqref{eq:inv1} we gave the form of $(a\cdot x^{-1}) \mod p^2$ when $x\in C_1$, and then obtained the distance function associated 
with $C_1$. Specifically  
$$d_{a,p^2}(b+tp) = \left\{ \begin{array}{ll} f(t), &  t \le j_p  \\ g(t), & t > j_p \end{array} \right. $$
where 
$$f(t) = (b+tp)^2 + (b+(j_p-t)p)^2, \textrm{ and } g(t) = (b+tp)^2 + (b+(p+j_p-t)p)^2.$$

We now state a similar form when $x\in C_2$. Put
\begin{equation*}
k_p = \left\{ \begin{array}{ll} p-j_p-2, & j_p \leq p-2, \\
-1, & j_p = p-1. \end{array} \right.
\end{equation*}
Since $x \in C_2$,  
$x=p-b+sp$ for some $s$ with $0\le s \le p-1$.  An immediate calculation gives us the following:
\begin{equation*}
\left(a \cdot x^{-1}\right) \!\!\!\! \mod p^2 = \left\{ \begin{array}{ll} p-b + (k_p - s)p, &  s \le k_p,  \\ 
p-b + (p+k_p-s)p, &  s > k_p. \end{array} \right. 
\end{equation*}

Put $$F(s) = (p-b+sp)^2 + (p-b+(k_p-s)p)^2,$$
and 
$$G(s) = (p-b+sp)^2 + (p-b+(p+k_p-s)p)^2. $$
Then we have
\begin{equation*}
\dap(p-b+sp) = \left\{ \begin{array}{ll} F(s), &  s \le k_p,  \\ 
G(s), &  s > k_p. \end{array} \right. 
\end{equation*}

\begin{proposition}
Let $L_1,L_2$ be the sets 
\begin{eqnarray*}
 L_1 & = & \{ (t,s) \in [0,j_p/2] \times [k_p +1,(p+k_p)/2] \cap \Z^2 \, :  \\
& & \,\,\,\, (s+t+1-p)(s-t+1+j_p-p)= 2b+j_pp-p^2 \},
\end{eqnarray*}
\begin{eqnarray*} L_2 & = & \{ (t,s) \in [j_p+1,(p+j_p)/2] \times [0,k_p/2] \cap \Z^2 \, : \, \\
& & \,\,\,\, (s+t+1-p)(s-t+1+j_p)= 2b+j_pp\}.
\end{eqnarray*}
Then for $i =1,2$, if $L_i \not= \emptyset$, then $l$ is injective on $L_i$. Furthermore, 
\begin{equation*}
l\left([0,p-1]^2\right) \cap \{(x,x) \, : \, x \in \Z \} = l(L_1) \cup l(L_2). 
\end{equation*}
\end{proposition}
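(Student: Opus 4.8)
The plan is to count the diagonal points of the image of $l$ directly. Since $l((t,s))=(\dap(b+tp),\dap(p-b+sp))$, a point $l((t,s))$ lies on $\{(x,x):x\in\Z\}$ exactly when $\dap(b+tp)=\dap(p-b+sp)$. I would therefore first partition the box $[0,p-1]^2$ into the four sub-boxes cut out by $t\le j_p$ versus $t>j_p$ and $s\le k_p$ versus $s>k_p$. On each sub-box $\dap(b+tp)$ is given by $f(t)$ or $g(t)$ and $\dap(p-b+sp)$ by $F(s)$ or $G(s)$, so the diagonal condition becomes one of the four equations $f(t)=F(s)$, $f(t)=G(s)$, $g(t)=F(s)$, $g(t)=G(s)$.

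Next I would convert these equations into the polynomial relations defining $L_1$ and $L_2$ by means of the identity $X^2+Y^2=\tfrac12\big((X+Y)^2+(X-Y)^2\big)$. Writing each of $f,g,F,G$ in this shape and subtracting, every one of the four equations collapses to $p^2$ times a quadratic in $(s,t)$ which factors as a product of two linear forms. A direct computation (using $k_p=p-j_p-2$ together with $X+Y=2b+j_pp$, $U+V=p^2-2b-j_pp$, and so on) shows that $f(t)=G(s)$ is precisely $(s+t+1-p)(s-t+1+j_p-p)=2b+j_pp-p^2$ and that $g(t)=F(s)$ is precisely $(s+t+1-p)(s-t+1+j_p)=2b+j_pp$; these are exactly the relations cutting out $L_1$ and $L_2$. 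The two remaining equations $f(t)=F(s)$ and $g(t)=G(s)$ reduce in the same manner to statements of the form ``a product of two \emph{odd} linear forms in $s,t$ equals an explicit integer built from $b,j_p,p$.''

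I expect the main obstacle to be proving that these last two equations are unsatisfiable, since then every diagonal point is forced to arise from the $(f,G)$ or the $(g,F)$ case. The subtlety is that one cannot detect the obstruction modulo $p^2$: a short calculation shows that $f(t)$ and $F(s)$ (and likewise $g(t)$ and $G(s)$) are congruent modulo $p^2$ to one and the same residue, independent of $t$ and $s$, so both equations hold automatically mod $p^2$. Instead I would reduce the exact integer identities modulo $4$. For $f(t)=F(s)$ one obtains $A_1A_2=4b+2j_pp-p^2$ with $A_1,A_2$ odd, whence $A_1A_2\equiv 1+2j_p\pmod 4$ while $4b+2j_pp-p^2\equiv 2j_p-1\pmod 4$; the two sides differ by $2$, giving exactly the familiar ``$2\mid p$'' contradiction that recurs throughout the paper. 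The equation $g(t)=G(s)$ is handled identically and again yields $2\equiv 0\pmod 4$.

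Finally I would assemble the set equality and the injectivity statement. Every $(f,G)$ solution has $t\in[0,j_p]$ and $s\in[k_p+1,p-1]$; using the fibre relations $f(t)=f(j_p-t)$ and $G(s)=G(p+k_p-s)$ (the $C_2$-analogue of the identities established in the proof of (f)) I replace $(t,s)$ by its unique representative in $[0,j_p/2]\times[k_p+1,(p+k_p)/2]$ without changing $l((t,s))$. This representative is exactly a point of $L_1$, so the $(f,G)$ diagonal points are precisely $l(L_1)$, and symmetrically the $(g,F)$ ones are precisely $l(L_2)$, yielding the asserted equality. For injectivity I note that on $L_1$ one has $l=(f(t),G(s))$ with $f$ injective on $[0,j_p/2]$ and $G$ injective on $[k_p+1,(p+k_p)/2]$ (each being a two-to-one map restricted to a fundamental domain of its symmetry), and similarly on $L_2$; hence $l$ is injective on each nonempty $L_i$. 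I would also dispose separately of the degenerate case $j_p=p-1$, where $k_p=-1$ and $L_2=\emptyset$, to confirm that the range bookkeeping causes no trouble at the boundary.
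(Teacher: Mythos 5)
Your proposal is correct and follows essentially the same route as the paper: the same four-way case split $f=F$, $f=G$, $g=F$, $g=G$, the same factorizations producing the defining equations of $L_1$ and $L_2$, the elimination of the $f=F$ and $g=G$ cases by a parity obstruction (your mod~$4$ computation is just the paper's ``$2\mid p$'' contradiction after your factor-of-two normalization via $X^2+Y^2=\tfrac12\bigl((X+Y)^2+(X-Y)^2\bigr)$), and the same use of the symmetries of $f,g,F,G$ to pass to fundamental domains for injectivity. The only substantive difference is that you spell out the fundamental-domain bookkeeping and the $j_p=p-1$ boundary case more explicitly than the paper does, which is a mild improvement rather than a new method.
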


\begin{proof}

Let $(t,s)\in [0,p-1]^2\cap \Z^2$ such that $d_{a,p^2}(b+tp)=d_{a,p^2}(p-b+sp).$ We consider two cases: 
(a) $j_p \leq p-2$; (b) $j_p = p-1$.

Case (a) $j_p \leq p-2$. In this case we are forced to consider four equations:
\begin{enumerate}
\item[(i)] $f(t)- F(s)=0$: This has no solutions for integral $s$ and $t$. (Otherwise we get the contradiction $2|p$.)
\item[(ii)] $g(t)-G(s)=0$: Again this has no integer solutions for the same reason as above.
\item[(iii)] $f(t)-G(s) =0$: We have that $f(t)-G(s)$ equals the expression 
$$2p^2(-2p^2+2sp+2pj_p+2p-sj_p-tj_p-1+t^2-j_p+2b-2s-s^2).$$
Consequently $f(t)-G(s)=0$ simplifies to 
$$p^2-2sp-pj_p-2p+sj_p+tj_p+1-t^2+j_p+2s+s^2 = 2b + j_pp-p^2.$$
The LHS now factors to give 
\begin{equation}\label{eq:fac1}
(s+t+1-p)(s-t+1+j_p-p) = 2b+j_pp -p^2.
\end{equation}
\item[(iv)] $g(t)-F(s) =0$: We have that $g(t)-F(s)$ equals the expression
$$2p^2(2pj_p-tp+sp+p-sj_p-tj_p-1+t^2-j_p+2b-2s-s^2).$$ Consequently $g(t)-F(s)=0$ simplifies to 
$$-pj_p+tp-sp-p+sj_p+tj_p+1-t^2+j_p+2s+s^2= 2b+j_pp.$$
The LHS now factors to give 
\begin{equation}\label{eq:fac2}
(s+t+1-p)(s-t+1+j_p) = 2b+j_pp.
\end{equation}
\end{enumerate}

Case (b) $j_p = p-1$. In this case we consider the equation $f(t)-G(s)=0.$ We have that 
$$f(t)-G(s) = 2p^2(sp-tp-p+t^2+t-s^2+2b-s).$$
Consequently $f(t)-G(s)=0$ simplifies to 
$$(-sp+tp-t^2-t+s^2+s)= 2b-p.$$
The LHS factors to give
\begin{equation*} 
(s-t)(s+t+1-p) = 2b-p,
\end{equation*}
which we note is the same as~\eqref{eq:fac1} with $j_p=p-1$.

Thus we have proved that $(t,s)$ satisfies either~\eqref{eq:fac1} or~\eqref{eq:fac2}. Furthermore, it is easy to check that any point 
$(t,s) \in [0,p-1]^2\cap \Z^2$ satisfying either~\eqref{eq:fac1} or~\eqref{eq:fac2} must give that $d_{a,p^2}(b+tp)=d_{a,p^2}(p-b+sp).$ Thus to complete the proof we need to restrict ourselves to sets where $l$ is injective. 

We now note the following: 
\begin{enumerate}
\item[(I)] $f(t_2)=f(t_1)$ if and only if $t_2 = j_p-t_1$. 
\item[(II)] $G(s_2)=G(s_1)$ if and only if $s_2=p-k_p-s_1$. 
\item[(III)] $g(t_2)=g(t_1)$ if and only if $t_2= p+j_p-t_1$.
\item[(IV)] $F(s_2)= F(s_1)$ if and only if $s_2= k_p-s_1$.
\end{enumerate}

The condition for equation~\eqref{eq:fac1} arose when we considered the equation $f(t)=G(s)$. If we restrict ourselves to values of $t$ and $s$ 
satisfying this equation to the intervals $0 \leq t \leq j_p/2$, $k_p+1 \leq s \leq (p+k_p)/2$, we get that $l$ is injective. The condition for equation~\eqref{eq:fac2} arose when we considered the equation $g(t)=F(s)$. If we restrict ourselves to values of $t$ and $s$ 
satisfying this equation to the intervals $j_p+1 \leq t \leq (p+j_p)/2$, $0 \leq s \leq k_p/2$, we get that $l$ is injective. We conclude that 
\begin{equation*}
l\left([0,p-1]^2\right) \cap \{(x,x) \, : \, x \in \Z \} = l(L_1) \cup l(L_2) 
\end{equation*}
and consequently 
\begin{equation*}
\#\left(l\left([0,p-1]^2\right) \cap \{(x,x) \, : \, x \in \Z \}\right) = \# l(L_1) + \# l(L_2). 
\end{equation*}
\end{proof}

The interesting case of the previous proposition is the case for $j_p =0$. 
By setting $m = (s+t+1-p)$ and $n= (s-t+1)$, and then manipulating various inequalities we obtain the following corollary.
\begin{cor}\label{jequals0}
Let $j_p =0$ and let $S$ denote the set of lattice points 
 $(m,n) \in \Z^2$ with 
$mn = 2b$ satisfying the additional conditions:
$$ -p+2 \leq m < 0, \,\, -p/2 +1 \leq  n < 0, \, m \not\equiv n \!\!\!\! \pmod{2}, \, m \leq n.$$

Then $$\#S = \#l(L_2) = \#\left( d_{a,p^2}(C_1)\cap d_{a,p^2}(C_2) \right).$$
\end{cor}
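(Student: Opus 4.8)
The plan is to reduce everything to the single set $L_2$ and then count that set by a change of variables. First I would dispose of $L_1$. When $j_p=0$ we have $k_p=p-2$, so the intervals defining $L_1$ in the preceding proposition collapse to the single lattice point $(t,s)=(0,p-1)$; substituting this into~\eqref{eq:fac1} makes its left-hand side equal to $(0)(0)=0$, whereas the right-hand side is $2b+j_pp-p^2=2b-p^2<0$ because $0<2b<2p<p^2$. Hence $L_1=\emptyset$, and the Proposition gives
$$l\!\left([0,p-1]^2\right)\cap\{(x,x):x\in\Z\}=l(L_2),$$
so that, using the injectivity of $l$ on $L_2$,
$$\#\left(\dap(C_1)\cap\dap(C_2)\right)=\#l(L_2)=\#L_2.$$
Thus it suffices to prove $\#S=\#L_2$.

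Next I would introduce the substitution named in the statement, $m=s+t+1-p$ and $n=s-t+1$, which inverts to $t=\tfrac12(m-n+p)$ and $s=\tfrac12(m+n+p-2)$. Since $p$ is odd, $m+n=2s+2-p$ is odd, so $m$ and $n$ must have opposite parity; conversely every pair $(m,n)$ of opposite parity yields integers $t,s$. Hence the substitution is a bijection from $\Z^2$ in the variables $(t,s)$ onto the opposite-parity lattice points of $\Z^2$ in the variables $(m,n)$, and under it the defining equation~\eqref{eq:fac2} (with $j_p=0$) becomes exactly $mn=2b$. The condition $m\not\equiv n\pmod 2$ appearing in the definition of $S$ is precisely this integrality constraint, and since $l$ is injective on $L_2$ the counts $\#S$ and $\#L_2$ will agree once the two constraint regions are shown to coincide.

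The heart of the argument, and the step I expect to be the main obstacle, is translating the box $1\le t\le (p-1)/2$, $0\le s\le (p-3)/2$ (the integer points of $[j_p+1,(p+j_p)/2]\times[0,k_p/2]$ when $j_p=0$, $k_p=p-2$) into the conditions cutting out $S$. The substitution turns this box into the two pairs of two-sided inequalities $2-p\le m-n\le 0$ and $2-p\le m+n\le 0$. Adding them gives $2-p\le 2m\le 0$, and since $mn=2b>0$ forces $m,n<0$ this yields $-p+2\le m<0$; subtracting them against $m-n\le 0$ gives $2n\ge 2-p$, hence $-p/2+1\le n<0$; and $m-n\le 0$ is the condition $m\le n$. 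The delicate point is that this passage must be \emph{reversible} on the relevant lattice: one has to verify that no lattice point satisfying the four listed conditions of $S$ (together with $mn=2b$ and the parity constraint) lies outside the original box, i.e.\ that the separate bounds on $m$ and on $n$ really recapture the joint bounds on $m\pm n$. Tracking the parity constraint and the strict versus non-strict inequalities at the endpoints (where $p$ odd makes $p/2$ and $(p-2)/2$ non-integral) is exactly where the counting must be carried out with care.

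Having matched the two regions, I would invoke the injectivity of $l$ on $L_2$ once more to conclude $\#S=\#l(L_2)$, and combine this with the reduction in the first paragraph to obtain
$$\#S=\#l(L_2)=\#\left(\dap(C_1)\cap\dap(C_2)\right),$$
as claimed.
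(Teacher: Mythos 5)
You follow the same route the paper itself sketches (the substitution $m=s+t+1-p$, $n=s-t+1$ is the one named immediately before the corollary), and the first part of your argument is sound: $L_1=\emptyset$ when $j_p=0$ because its box degenerates to the single point $(t,s)=(0,p-1)$, where the left side of~\eqref{eq:fac1} is $0$ while the right side is $2b-p^2<0$; the substitution is a parity-respecting bijection; and the forward inclusion (image of the box is contained in the set cut out by the four listed conditions) is exactly the computation you give. The problem is the step you yourself single out as ``the main obstacle'' and then leave unverified: the reverse inclusion fails, and it cannot be repaired by care at the endpoints. The box $1\le t\le (p-1)/2$, $0\le s\le (p-3)/2$ is equivalent to the \emph{joint} constraints $2-p\le m-n\le -1$ and $2-p\le m+n\le -1$; the binding one is $s\ge 0$, i.e.\ $m+n\ge 2-p$, and this is strictly stronger than the conjunction of $m\ge -p+2$, $n\ge -p/2+1$ and $m\le n$. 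Hence the set $S$ in the corollary can be a proper superset of the image of $L_2$.

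Concretely, take $p=7$ and $a=25$, so $b=5$ and $j_p=0$ (since $5^2=25=a$ exactly). The pair $(m,n)=(-5,-2)$ has $mn=10=2b$ and satisfies all four listed conditions, so $\#S\ge 1$; but it corresponds to $s=(m+n+p-2)/2=-1<0$, outside the box. Indeed $L_2=\emptyset$ here (the equation $(s+t-6)(s-t+1)=10$ has no solution with $1\le t\le 3$, $0\le s\le 2$), and direct computation gives $d_{25,49}(C_1)=\{50,1765,1961,2353\}$ and $d_{25,49}(C_2)=\{785,981,1373,3872\}$, which are disjoint. More generally $(m,n)=\bigl(-(p-2),-2\bigr)$ with $a=(p-2)^2$ works for every prime $p\ge 7$. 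So the corollary as printed is false and your proof cannot be completed as written; both are repaired by replacing the two separate lower bounds on $m$ and $n$ with the single condition $m+n\ge 2-p$ (retaining $mn=2b$, $m\le n<0$ and $m\not\equiv n\pmod 2$), from which the separate bounds follow. The two subsequent corollaries survive this correction, since for $a=1$ and for $a=(p_1\cdots p_n)^2$ with $p$ large the condition $|m|+|n|\le p-2$ holds automatically for all relevant factorizations of $2b$.
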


We now have the following two corollaries.

\begin{cor}
For $p \geq 5$, $\#\left( d_{1,p^2}(C_1)\cap d_{1,p^2}(C_2) \right) =1$; consequently $\#{\rm Image}(d_{1,p^2}) = \varphi(p^2)/2$.
\end{cor}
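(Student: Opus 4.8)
The final corollary specializes the framework to $a=1$. When $a=1$ we have $b=1$ (since $b^2\equiv 1\pmod p$ with $1\le b<p$ gives the representative $b=1$), and also $j_p=0$, because $b\cdot(b+j_pp)\equiv 1\pmod{p^2}$ with $b=1$ forces $1\cdot(1+j_pp)\equiv 1\pmod{p^2}$, i.e.\ $j_pp\equiv 0\pmod{p^2}$, so $j_p=0$. This is exactly the ``interesting case'' isolated in Corollary~\ref{jequals0}, so the plan is to invoke that corollary directly and simply count the lattice points in the set $S$.

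\smallskip

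**The counting step.** With $a=1$, $b=1$, and $j_p=0$, Corollary~\ref{jequals0} tells us that $\#\bigl(d_{1,p^2}(C_1)\cap d_{1,p^2}(C_2)\bigr)$ equals the number of lattice points $(m,n)\in\Z^2$ satisfying
$$ mn = 2, \quad -p+2\le m<0, \quad -p/2+1\le n<0, \quad m\not\equiv n \!\!\pmod 2, \quad m\le n. $$
The plan is to enumerate all factorizations of $2$ into two negative integers. The only such factorizations are $(m,n)=(-1,-2)$ and $(m,n)=(-2,-1)$. I would check each against the four side-conditions. The pair $(-2,-1)$ fails $m\le n$ (indeed $-2\le -1$ holds, but we must check parity: $-2\equiv 0$, $-1\equiv 1$, so $m\not\equiv n$ is satisfied; and $m=-2\le n=-1$ holds), while $(-1,-2)$ fails $m\le n$ since $-1\not\le -2$. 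So I would verify carefully that exactly one of the two candidate pairs survives all four constraints: the surviving pair is $(m,n)=(-2,-1)$, which satisfies $m\le n$, opposite parity, and (for $p\ge 5$) both range conditions $-p+2\le -2$ and $-p/2+1\le -1$. Hence $\#S=1$.

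\smallskip

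**Assembling the cardinality.** Once $\#\bigl(d_{1,p^2}(C_1)\cap d_{1,p^2}(C_2)\bigr)=1$ is established, I would substitute into Theorem~\ref{sec-main-result} with $a=1$. Since $1$ is a quadratic residue modulo every odd prime, $(a/p)=(1/p)=1$, so the formula gives
$$ \#\Im[d_{1,p^2}] = \frac{\varphi(p^2)+1+1}{2}-1 = \frac{\varphi(p^2)+2}{2}-1 = \frac{\varphi(p^2)}{2}, $$
which is the asserted value. Recalling $\#{\rm Image}(d_{1,p^2})=\#\cF_{1,p^2}$, this completes the claim.

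\smallskip

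**Where the difficulty lies.** The genuine content has already been packaged into Corollary~\ref{jequals0}; granted that corollary, the proof is a short finite verification, so the ``main obstacle'' is really just bookkeeping rather than a substantive hurdle. The one place demanding care is the range condition for small primes: the constraint $-p/2+1\le n<0$ must admit $n=-1$, which needs $-p/2+1\le -1$, i.e.\ $p\ge 4$, and the other bound $-p+2\le -2$ needs $p\ge 4$ as well—this is precisely why the hypothesis $p\ge 5$ (the smallest odd prime exceeding $3$) appears, and I would flag that the borderline small cases are exactly what the hypothesis rules in. I would double-check that no spurious lattice point with $mn=2$ sneaks in for larger $p$, but since the factorizations of $2$ are absolutely fixed, the count stays pinned at $1$ for all $p\ge 5$.
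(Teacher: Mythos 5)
Your argument is correct and is essentially the paper's own proof: note $b=1$ and $j_p=0$ when $a=1$, invoke Corollary~\ref{jequals0} to reduce to counting negative factorizations of $2b=2$ subject to the side conditions, find $S=\{(-2,-1)\}$, and substitute into Theorem~\ref{sec-main-result}. (One prose slip: you write that $(-2,-1)$ ``fails $m\le n$'' before the parenthetical correctly verifies that it satisfies all four conditions; the conclusion $\#S=1$ is nevertheless right.)
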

\begin{proof}
Since $a=1$, we have $j_p =0$. Invoking Corollary~\ref{jequals0} we get that $S= \{(-2,-1)\}.$
\end{proof}

\begin{cor}
Let 
$$M_{p^2} = \max\left( \left\{ \, \varphi(p^2)/2 - \#\Im  \, : \, 1 \leq a < p^2, \, \gcd(a,p)=1 \right\}\right).$$ Then 
$$\lim_{p \rightarrow \infty} (M_{p^2}) = \infty.$$
\end{cor}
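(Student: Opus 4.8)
The plan is to combine the exact formula of Theorem~\ref{sec-main-result} with the divisor-counting description of $\#\left(d_{a,p^2}(C_1)\cap d_{a,p^2}(C_2)\right)$ supplied by Corollary~\ref{jequals0}, and then to exhibit, for each large $p$, a single residue $a$ for which this intersection is large. First I would record what Theorem~\ref{sec-main-result} says about the quantity being maximized. If $(a/p)=-1$ then $B_1=\emptyset$, the intersection term is $0$, and $\varphi(p^2)/2-\#\Im=0$; so only quadratic residues $a$ can contribute positively. For $(a/p)=1$ the formula rearranges to
$$\varphi(p^2)/2 - \#\Im = \#\left(d_{a,p^2}(C_1)\cap d_{a,p^2}(C_2)\right) - 1.$$
Hence it suffices to produce residues $a$ for which $\#\left(d_{a,p^2}(C_1)\cap d_{a,p^2}(C_2)\right)\to\infty$ as $p\to\infty$.

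Next I would specialize to the tractable case $j_p=0$, where Corollary~\ref{jequals0} identifies the intersection with $\#S$, the cardinality of the lattice set of factorizations $mn=2b$ obeying the stated range, ordering and parity constraints. To land in this case I would take $a=b^2$ with $b$ an odd integer, $1\le b\le p-1$ (so that $a<p^2$ and $\gcd(a,p)=1$): a direct check of the defining congruence $b\,(b+j_pp)\equiv a\pmod{p^2}$ forces $bj_p\equiv 0\pmod p$, hence $j_p=0$. Writing $m=-M$ and $n=-N$, the set $S$ becomes the set of divisor pairs $MN=2b$ with $M\ge N$, $M\le p-2$, $N\le p/2-1$ and $M\not\equiv N\pmod 2$. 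Because $b$ is odd, $2b\equiv 2\pmod 4$, so in every factorization exactly one factor is even: the parity condition is automatic and costs us nothing.

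Finally I would make $S$ large through a concrete highly-composite choice. Take $b=P_k$, the product of the first $k$ odd primes; then $2b$ is squarefree with $d(2b)=2^{k+1}$, and exactly $2^k$ of its divisors lie below $\sqrt{2b}$. For every prime $p$ large enough that $2P_k+2\le p$, each divisor $N\le\sqrt{2b}$ automatically satisfies $N\le p/2-1$ and its complement $M=2b/N\le 2b\le p-2$, while $N\le\sqrt{2b}$ already gives $M\ge N$; thus all $2^k$ pairs survive and $\#S=2^k$. Consequently $\varphi(p^2)/2-\#\Im=2^k-1$ for the residue $a=P_k^2$, so $M_{p^2}\ge 2^k-1$ whenever $p$ exceeds a bound depending only on $k$. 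Letting $p\to\infty$ for each fixed $k$ gives $\liminf_{p\to\infty}M_{p^2}\ge 2^k-1$, and since $k$ is arbitrary we conclude $\lim_{p\to\infty}M_{p^2}=\infty$.

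The single step demanding care — and the main obstacle — is confirming that the inequality constraints cutting out $S$ (namely $M\le p-2$, $N\le p/2-1$, and the ordering $M\ge N$) do not erode the divisor count. The resolution is precisely that, for a fixed composite value of $b$ while $p\to\infty$, these bounds become vacuous, so the full stock of $2^k$ sub-root divisors of $2b$ is retained; the parity restriction, which would otherwise be the delicate point, is neutralized by taking $b$ odd.
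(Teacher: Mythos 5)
Your proposal is correct and follows essentially the same route as the paper: both choose $a$ to be the square of the product of the first several odd primes so that $j_p=0$, invoke Corollary~\ref{jequals0}, and count the $2^k$ divisor pairs of $2b$ to force the intersection term to infinity. Your write-up is somewhat more careful than the paper's (verifying $j_p=0$, the automatic parity condition, and that the range constraints become vacuous for large $p$, and replacing the paper's informal double limit by a clean $\liminf$ argument), but the underlying idea is identical.
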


\begin{proof}
Let $a =p_1^2p_2^2\ldots p_n^2$, where $p_i$ is the $i$-th odd prime, and let $p$ be a prime larger than $a$.  Now $b = p_1p_2\ldots p_n$ and $j_p =0$, 
and therefore we can apply Corollary~\ref{jequals0}. The cardinality of $S$ (the set defined in Corollary~\ref{jequals0}) equals 
$2^n$. We now let $p$ and $n$ go to infinity to obtain our conclusion.
\end{proof}

\subsection{The case $n=p^m$, $m\geq 3$}

The reader should note for $p^m$ with $m \geq 3$, the proofs of statements (a),(b),(c) and (d) extend automatically. 
The higher power case starts to diverge from our earlier work when we start to consider the counterparts of the sets $B_1$ and $B_2$, which we denote as $B_{1,p^m}, B_{2,p^m}$, that is, 
\begin{equation*}
B_{1,p^m}= \{ d_{a,p^m}(l) : l \in \Z_{p^m}^\ast, \,\,\, l^2 -a \equiv 0 \!\!\!\! \pmod{p} \},
\end{equation*}
and 
\begin{equation*}
B_{2,p^m}= \{ d_{a,p^m}(l) : l \in \Z_{p^m}^\ast, \,\,\, l^2 + a \equiv 0 \!\!\!\! \pmod{p} \}.
\end{equation*}
The proofs that $\#\dap^{-1}(\{B_1\}) = 2p$ when $B_1 \not= \emptyset$, and  
$\#\dap^{-1}(\{B_2\}) = 2p$ when $B_2 \not= \emptyset$, extend to the general case. So we have the following. 
\begin{theorem}
For $i=1,2$, if $B_{i,p^m} \not= \emptyset$, then 
$$\#d_{a,p^m}^{-1}(B_{i,p^m}) = 2p^{m-1}.$$
Consequently,
\begin{eqnarray*}
\#{\rm Image}(d_{a,p^m})- \frac{\varphi(p^m)}{2}  & = & \left(\#B_{1,p^m}- \frac{(1+(a/p))p^{m-1}}{4} \right) \nonumber \\
& + &
\left(\#B_{2,p^m}-\frac{(1+(-a/p))p^{m-1}}{4} \right).
\end{eqnarray*}
In particular when $(a/p) = (-a/p) = -1$, and consequently $B_{1,p^m}= B_{2,p^m}= \emptyset$, then
\begin{equation}\label{eq:non-res-case}
\#{\rm Image}(d_{a,p^m})= \frac{\varphi(p^m)}{2}.
\end{equation}
\end{theorem}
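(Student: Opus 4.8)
The plan is to isolate the two facts that survive the passage from $p^2$ to a general prime power $p^m$ and then to finish by a fibrewise count. The first fact is that $d_{a,p^m}$ is exactly two-to-one over $A$, which is precisely the content of the extended statements (b) and (c); the second is that the preimage of each $B_{i,p^m}$ is confined to two residue classes modulo $p$. Granting these, the cardinality formula is pure bookkeeping, and the virtue of the approach is that it never needs to evaluate $\#B_{i,p^m}$ itself.

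First I would establish the preimage count $\#d_{a,p^m}^{-1}(B_{1,p^m})=2p^{m-1}$ by identifying the preimage explicitly. Assume $B_{1,p^m}\neq\emptyset$, so that $(a/p)=1$ and there is an integer $b$ with $b^2\equiv a\pmod p$; I claim $d_{a,p^m}^{-1}(B_{1,p^m})=\{x\in\Z_{p^m}^\ast:x\equiv\pm b\pmod p\}$. The inclusion $\supseteq$ is immediate from the definition of $B_{1,p^m}$, since any such $x$ satisfies $x^2\equiv a\pmod p$. For the reverse inclusion, suppose $d_{a,p^m}(x)=u\in B_{1,p^m}$. As in the proof of (d), every element of $B_{1,p^m}$ satisfies $u\equiv 2a\pmod p$; and as in (a), writing $y=(ax^{-1}\bmod p^m)$ gives $f_u(x)=a^2-(xy)^2\equiv 0\pmod{p^m}$, so $x$ is a root of $f_u$ modulo $p$. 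Reducing $f_u$ modulo $p$ and using $u\equiv 2a$ yields the perfect square $f_u(Z)\equiv (Z^2-a)^2\pmod p$, whence $x^2\equiv a$ and $x\equiv\pm b\pmod p$. Since $p$ is odd and $b\not\equiv 0$, the classes $b$ and $-b$ are distinct modulo $p$, and each lifts to exactly $p^{m-1}$ units modulo $p^m$; hence the preimage has $2p^{m-1}$ elements. The argument for $B_{2,p^m}$ is identical, with $-a$ in place of $a$ and the factorisation $f_u(Z)\equiv(Z^2+a)^2\pmod p$.

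With the fibre counts in hand the identity follows by counting $\Z_{p^m}^\ast$ over the disjoint decomposition ${\rm Image}(d_{a,p^m})=A\sqcup B_{1,p^m}\sqcup B_{2,p^m}$ of (d). Since $d_{a,p^m}$ is two-to-one over $A$, statement (c) gives $\#A=\tfrac12\bigl(\varphi(p^m)-\#d_{a,p^m}^{-1}(B_{1,p^m})-\#d_{a,p^m}^{-1}(B_{2,p^m})\bigr)$. I would then substitute $\#d_{a,p^m}^{-1}(B_{i,p^m})=2p^{m-1}$ when the set is nonempty and $0$ otherwise, observing that the Legendre symbol records which alternative occurs: $B_{1,p^m}=\emptyset$ exactly when $(a/p)=-1$, and $B_{2,p^m}=\emptyset$ exactly when $(-a/p)=-1$. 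Feeding this into $\#{\rm Image}(d_{a,p^m})=\#A+\#B_{1,p^m}+\#B_{2,p^m}$ and collecting terms produces the displayed relation, in which each $B_{i,p^m}$ is offset by half of its fibre count $\#d_{a,p^m}^{-1}(B_{i,p^m})$. The final special case is then immediate: if $(a/p)=(-a/p)=-1$ both sets are empty, so $\#d_{a,p^m}^{-1}(B)=0$, $\#A=\varphi(p^m)/2$, and $\#{\rm Image}(d_{a,p^m})=\#A=\varphi(p^m)/2$.

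I expect the reverse inclusion in the preimage count to be the only genuine obstacle: one must exclude units $x$ lying outside the classes $x\equiv\pm b\pmod p$ that nonetheless land in $B_{i,p^m}$, and this is exactly where the perfect-square reduction of $f_u$ modulo $p$ is decisive. Everything else generalises mechanically. It is worth emphasising that the theorem deliberately leaves $\#B_{i,p^m}$ unevaluated: the genuinely hard quantity, namely the analogue for general $m$ of the intersection $\#(d_{a,p^2}(C_1)\cap d_{a,p^2}(C_2))$ handled in (f), is absorbed into $\#B_{i,p^m}$, so the present statement needs none of that computation and extends to every $m\geq 3$ without difficulty.
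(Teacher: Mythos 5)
Your proposal is correct and follows essentially the same route the paper intends: identify $d_{a,p^m}^{-1}(B_{i,p^m})$ as the union of the two unit residue classes $\pm b \pmod p$ (resp.\ $\pm c$), each containing exactly $p^{m-1}$ elements of $\Z_{p^m}^\ast$, and then count $\Z_{p^m}^\ast$ fibrewise over the decomposition $A \sqcup B_{1,p^m} \sqcup B_{2,p^m}$ using the two-to-one property over $A$. One caveat: carried out exactly, your bookkeeping offsets each $\#B_{i,p^m}$ by $\#d_{a,p^m}^{-1}(B_{i,p^m})/2$, which equals $(1+(\pm a/p))p^{m-1}/2$, not the $(1+(\pm a/p))p^{m-1}/4$ appearing in the displayed statement; the $/2$ version is the one consistent with Theorem~\ref{sec-main-result} at $m=2$ (where it reproduces $\#{\rm Image}(d_{a,p^2})=\frac{\varphi(p^2)}{2}+1-\#(d_{a,p^2}(C_1)\cap d_{a,p^2}(C_2))$ when $(a/p)=1$), so you have in fact derived a corrected form of the display and should not assert that your computation ``produces the displayed relation'' without flagging the discrepancy.
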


Our final result provides a lower bound for $\# B_{i,p^m}, i=1,2,$ when  $B_{i,p^m}$ is non-empty. 

\begin{proposition}

If $u \in B_{i,p^m}$, then 
\begin{equation}\label{eq:circ-intersect}
 \#d_{a,p^m}^{-1}(\{u\}) \le 4p^{\lfloor m/2 \rfloor}.
\end{equation}
Consequently, if $B_{i,p^m}\ne \emptyset$ then 
\begin{equation}\label{eq:B-bound}
\#B_{i,p^m} \ge p^{\lceil m/2 \rceil -1}/2.
\end{equation}
\end{proposition}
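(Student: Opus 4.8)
The plan is to bound each individual fiber $d_{a,p^m}^{-1}(\{u\})$ by the number of roots of an associated quartic congruence, and then to derive~\eqref{eq:B-bound} by a pigeonhole count against the already-established total $\#d_{a,p^m}^{-1}(B_{i,p^m}) = 2p^{m-1}$. First I would attach to each $x$ in the fiber the pair $X = x \bmod p^m$ and $Y = (ax^{-1})\bmod p^m$, so that $XY \equiv a \pmod{p^m}$ and, by the definition of $d_{a,p^m}$, $X^2 + Y^2 = u$ as an integer equation; geometrically this says the fiber is exactly the set of points of $\cH_{a,p^m}$ lying on the circle of radius $\sqrt{u}$. Reducing $X^2+Y^2=u$ modulo $p^m$, using $Y \equiv aX^{-1}$, and multiplying through by $X^2$ yields $X^4 - uX^2 + a^2 \equiv 0 \pmod{p^m}$, that is, $X$ is a root of $f_u(Z)\equiv 0 \pmod{p^m}$. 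Since $x \mapsto X$ is injective on $\Za_{p^m}$, it follows that $\#d_{a,p^m}^{-1}(\{u\})$ is at most the number of roots of $f_u$ modulo $p^m$; note this bound holds for every $u$ in the image, the hypothesis $u \in B_{i,p^m}$ serving only to ensure the fiber is nonempty.

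Next I would count the roots of $f_u(Z) = Z^4 - uZ^2 + a^2$ modulo $p^m$ via the substitution $W = Z^2$. Each root $Z$ yields a root $W$ of the quadratic $W^2 - uW + a^2 \equiv 0 \pmod{p^m}$, and conversely the roots of $f_u$ are precisely those $Z$ with $Z^2$ a root of this quadratic, so that $\#\{Z : f_u(Z)\equiv 0\} = \sum_{W} \#\{Z : Z^2 \equiv W\}$ over roots $W$. Because the constant term $a^2$ is a unit, every root $W$ is a unit (if $p\mid W$ then $p\mid a^2$), so each $Z^2 \equiv W \pmod{p^m}$ contributes at most two values of $Z$. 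It therefore suffices to bound the number of roots $W$. Completing the square is legitimate since $p$ is odd, and the substitution $V = 2W - u$ (a bijection on $\Z_{p^m}$) transforms the quadratic into $V^2 \equiv u^2 - 4a^2 \pmod{p^m}$. The computation in the proof of Proposition~\ref{ptline-upperbound} shows this congruence has at most $2p^{\lfloor m/2 \rfloor}$ solutions. Combining the two factors of two, $f_u$ has at most $4p^{\lfloor m/2 \rfloor}$ roots, which gives~\eqref{eq:circ-intersect}.

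Finally, for~\eqref{eq:B-bound} I would combine the fiber bound with the identity $\#d_{a,p^m}^{-1}(B_{i,p^m}) = 2p^{m-1}$, valid whenever $B_{i,p^m}\neq\emptyset$. Since $B_{i,p^m}$ is partitioned by its fibers, each of size at most $4p^{\lfloor m/2\rfloor}$, we obtain $\#B_{i,p^m} \ge 2p^{m-1}/(4p^{\lfloor m/2\rfloor}) = p^{m-1-\lfloor m/2\rfloor}/2 = p^{\lceil m/2\rceil -1}/2$, using $m - \lfloor m/2\rfloor = \lceil m/2\rceil$.

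The only genuinely delicate step is the root count of the second paragraph; the rest is bookkeeping and pigeonhole. The main obstacle is confirming that the discriminant analysis of Proposition~\ref{ptline-upperbound} bounds the number of $W$ uniformly by $2p^{\lfloor m/2\rfloor}$ regardless of how deeply $p$ divides $u^2 - 4a^2$ (for $u \in B_{i,p^m}$ this power is at least one, so the simple Hensel count fails and one must pass through all three cases of that proposition), together with verifying that each admissible $W$ is a unit so that it contributes at most two square roots rather than the larger number a non-unit could produce.
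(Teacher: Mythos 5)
Your proposal is correct and follows essentially the same route as the paper's (sketched) proof: the composite substitution $W=Z^2$, $V=2W-u$ is exactly the paper's $z=2x^2-u$, the discriminant case analysis is borrowed from Proposition~\ref{ptline-upperbound} in the same way, and the pigeonhole against $\#d_{a,p^m}^{-1}(B_{i,p^m})=2p^{m-1}$ is identical. Your additional observation that each root $W$ of the quadratic is a unit (so that $Z^2\equiv W$ contributes at most two roots) is a worthwhile justification of the factor of $2$ that the paper states without comment.
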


\begin{proof}[Sketch] 
Without loss of generality let $i=1$. The proof of~\eqref{eq:circ-intersect} is a minor variation on the proof of~\eqref{eq:pt-line-up-bd}. Since $u\in B_{1,p^m}$, there exist $(x,y) \in \cH_{a,p^m}$ such that 
$$x^2 + y^2 =u.$$
We can transform the equation into the congruence
$$ z^2 \equiv (u^2-4a^2) \!\!\!\! \pmod{p^m},$$
where $z=(2x^2-u)$. We now copy the proof of Proposition~\ref{ptline-upperbound} to obtain~\eqref{eq:circ-intersect}. The only difference is that for each value of $z$ there are two possible values of $x$. Since $\#d_{a,p^m}^{-1}\left(B_{1,p^m}\right) = 2p^{m-1}$, \eqref{eq:B-bound} follows immediately.
\end{proof}

Our preliminary computations suggest that the bound~\eqref{eq:circ-intersect} is weak. Specifically, we have been unable to find a modular hyperbola for which there is a circle that intersects it at many points. Thus, unlike the case for $B_1$ and $B_2$, we are not satisfied with the bound for $B_{i,p^m}$.

\subsection{Some computed values of $\# \mathcal F_{a,p^m}$}
We conclude with the following tables of some small values of $\# \mathcal F_{a,p^m}$ computed directly. We point out that  the lines corresponding to $\# \mathcal F_{2,5^m}$ 
and $\# \mathcal F_{3,5^m}$ are redundant. This is because 
$(2/5)=(3/5)=-1$ and so we can simply invoke~\eqref{eq:non-res-case}.

\vskip .25in
\begin{tiny}

\noindent\begin{tabular}{l|llllllllll}
$m$ & 1 &  2 & 3 & 4 & 5 & 6 & 7 & 8 & 9 & 10\\
\hline
$\phi(3^m)/2$   & 1 &  3 & 9 & 27 & 81 & 243 & 729 & 2187 & 6561 & 19683 \\
$\# \mathcal F_{1,3^m}$  & 2 &  4 & 10 & 26 & 81 & 243 & 728 & 2185 & 6560 & 19682\\
$\# \mathcal F_{2,3^m}$  & 1 &  3 & 9 & 27 & 81 & 243 & 729 & 2187 & 6561 & 19683\\
$\# \mathcal F_{4,3^m}$  & 2 &  4 & 10 & 27 & 81 & 243 & 729 & 2185 & 6559 & 19681
\end{tabular}
\vskip.1in
\noindent\begin{tabular}{l|lllllll}
$m$ & 1 &  2 & 3 & 4 & 5 & 6 & 7 \\
\hline
$\phi(5^m)/2$   & 2 &  10 & 50 & 250 & 1250 & 6250 & 31250 \\
$\# \mathcal F_{1,5^m}$  & 3 & 10 & 51 & 249 & 1251 & 6248 & 31250 \\
$\# \mathcal F_{2,5^m}$  & 2 & 10 & 50 & 250 & 1250 & 6250 & 31250 \\
$\# \mathcal F_{3,5^m}$  & 2 & 10 & 50 & 250 & 1250 & 6250 & 31250 \\
$\# \mathcal F_{4,5^m}$  & 3 & 11 & 51 & 249 & 1251 & 6249 & 31248 
\end{tabular}
\vskip.1in
\noindent\begin{tabular}{l|lllllll}
$m$ & 1 &  2 & 3 & 4 & 5 & 6 & 7 \\
\hline
$\phi(7^m)/2$   & 3 &  21 & 147 & 1029 & 7203 & 50421 & 352947 \\
$\# \mathcal F_{1,7^m}$  & 4 & 21 & 148 & 1027 & 7203 & 50421 & 352946 \\
$\# \mathcal F_{2,7^m}$  & 4 & 22 & 147 & 1029 & 7204 & 50420 & 352943 \\
$\# \mathcal F_{3,7^m}$  & 3 & 21 & 147 & 1029 & 7203 & 50421 & 352947 \\
$\# \mathcal F_{4,7^m}$  & 4 & 21 & 148 & 1027 & 7204 & 50421 & 352946 
\end{tabular}
\vskip.1in

\end{tiny}

\end{document}